\newtheorem{theorem}{Theorem}[section]
\newtheorem{proposition}[theorem]{Proposition}
\newtheorem{corollary}[theorem]{Corollary}
\newtheorem{definition}[theorem]{Definition}
\newtheorem{remark}[theorem]{Remark}
\newtheorem{example}[theorem]{Example}
\newenvironment{proof}{\begin{trivlist}\item\textit{Proof:}\ }{\hfill$\Box$\end{trivlist}}
\def\ppp{\mathbf{p}}
\def\ww{\mathbf{w}}
\def\zz{\mathbf{z}}
\def\HH{\mathbb{H}}
\def\RR{\mathbb{R}}
\def\CC{\mathbb{C}}
\def\ZZ{\mathbb{Z}}
\def\GG{\mathbb{G}}
\def\wtPhi{\widetilde{\Phi}}
\def\wtM{\widetilde{M}}
\def\CPn{\mathbb{CP}^n}
\def\CHn{\mathbb{CH}^n}
\newcommand{\su}{\mathfrak{su}}
\newcommand{\tPsi}{\tilde{\Psi}}
\newcommand{\xx}{\mathbf{x}}
\renewcommand{\ww}{\mathbf{w}}
\begin{document}
 
\title{\textbf{A Twistor Construction of Hopf Real Hypersurfaces in 
Complex hyperbolic Space}} 

\author{Jong Taek Cho, Makoto Kimura and Miguel Ortega
\footnote{J.~T.~Cho is supported by Basic Science Research Program through the National Research
Foundation of Korea(NRF) funded by the Ministry of Education, Science and Technology
(2019R1F1A1040829). 
 M.~Kimura is supported by JSPS KAKENHI Grant Number 
JP20K03575. 
M.~Ortega is partially supported by the projects 
PID2020-116126GB-I00 (MICINN), 
PY20-01391 (Junta de Andaluc\'ia)
and the “Maria de Maeztu” Excellence
Unit IMAG, reference CEX2020-001105-M, funded by
MCIN/AEI/10.13039/501100011033/ .
}} 

\date{\today}

\maketitle 

\begin{abstract} It is very well known that Hopf real hypersurfaces in the complex projective space can be locally characterized as tubes over complex submanifolds. This also holds true for some, but not all, Hopf real hypersurfaces in the complex hyperbolic space. The main goal of this paper is to show, in a unified way, how to construct Hopf real hypersurfaces in the complex hyperbolic space from a horizontal submanifold in one of the three twistor spaces of the indefinite complex $2$-plane Grassmannian with respect to the natural para-quaternionic K\"ahler structure. We also identify these twistor spaces with the sets of circles in totally geodesic complex hyperbolic lines in the complex hyperbolic space. As an application, we describe all classical Hopf examples. We also solve the remarkable and long-standing problem of the existence of Hopf real hypersurfaces in the complex hyperbolic space, different from the horosphere, such that the associated principal curvature is $2$. We exhibit a  method to obtain plenty of them. 
\end{abstract}

\noindent \emph{MSC:} Primary 53C40
\par
\noindent \emph{Keywords:} 
Complex hyperbolic Space, 
Hopf hypersurfaces, 
Twistor space, 
Indefinite Complex $2$-plane Grassmannian.

\section{Introduction}

A very important family of real hypersurfaces in a K\"ahler manifold $(\widetilde{M},J)$  consists of the so-called Hopf real hypersurfaces. A real hypersurface $M^{2n-1}$ in $\widetilde{M}^n$ is called \textit{Hopf} if for a (local) unit normal vector $N$, the structure vector field of $M$,  $\xi:=-JN$, is an eigenvector  of the shape operator $A$ of $M$, i.e., $A\xi=\mu\xi$. When the ambient space is the complex projective space $\CPn$ (of constant holomorphic sectional curvature $4$), such family is very nice, among other reasons, because (1) when its focal map has constant rank, the image of its focal map is locally a complex submanifold in $\CPn$, (2) any tube over a complex submanifold is Hopf according to Theorems 2.1 and 2.2 in \cite{CR}), and (3) any homogeneous real hypersurface in $\CPn$ is Hopf (see \cite{Tak1}, \cite{Tak2}). 
\par
For a Hopf hypersurface with $A\xi=\mu\xi$ in the complex hyperbolic space $\CHn$ of constant holomorphic sectional curvature $-4$, similar results to Theorems 2.1 and 2.2 in \cite{CR} hold true (cf. \cite{Mo}) provided $|\mu|>2$. In particular, a tube of a specific radius of such real hypersurface will be a complex submanifold. In addition, tubes over complex submanifolds satisfy $|\mu|>2$. However, this is not true any more when $|\mu|\le 2$. A simple counterexample is the horosphere ($\vert\mu\vert=2$), since any tube over it is again another horosphere (see \cite{Mo}), which is in the list of homogeneous real hypersurfaces in the complex hyperbolic space (cf. \cite{B}). Surprisingly, in this classification result, there are some examples which are not Hopf.  On the other hand, Hopf hypersurfaces with $|\mu|<2$ in $\CHn$ can be constructed from an arbitrary pair of Legendrian submanifolds in $S^{2n-1}$ (see \cite{I}, \cite{IR}). 
\par
The main target of this paper is to construct, in a unified way, Hopf real hypersurfaces in $\CHn$ for any $\mu$. The inspiring idea of our construction is the fact that given a Hopf real hypersurface in $\CHn$, an integral curve of $\xi$ is a Frenet curve of order 2 with constant curvature $\kappa$, and contained in a totally geodesic $\mathbb{CH}^1(-4)\subset \mathbb{CH}^n(-4)$. There are three families of such curves, depending on $\kappa=2$, $\kappa>2$ or $0\leq\kappa<2$, with different geometric behaviours.  More specifically, the family of \textit{circles} in $\mathbb{CH}^1(-4)$ is divided into $3$ classes (up to isometries): (i) a geodesic ($\kappa=0$) and its equidistant curves, ($0<\kappa<2$), (ii) a horocycle ($\kappa=2$), and (iii) a geodesic circle ($\kappa>2$). Thus, a second target of this paper, which is interesting by itself, is to determine the spaces of such curves in $\CHn$. These spaces will be used in the constructions of Hopf real hypersurfaces in $\CHn$. 
\par
In order to achieve our goals, we will use \textit{twistor spaces}.  Needless to say, they play important roles both in Geometry and Physics. For example, minimal $2$-spheres in $S^4$ are closely related to horizontal holomorphic curves with respect to the \textit{twistor fibration} $\mathbb{CP}^3\rightarrow$ $S^4$ (see \cite{Br}). This result is generalized to maximal K\"ahler submanifolds in quaternionic K\"ahler manifolds and horizontal complex Legendrian submanifolds in the twistor space (see \cite{AM2}). We will also need \textit{para-quaternionic K\"ahler manifolds}, which we will review in Section \ref{PQK}. 
\par 
Our starting point is $V_{1,1}(\mathbb{C}_1^{n+1})$, which is the \textit{indefinite complex Stiefel manifold of 
orthonormal time-like and space-like vectors in} $\CC_1^{n+1}$  (Section \ref{Twist}), which is a bundle over the indefinite complex 2-plane Grassmannian $M_G=\GG_{1,1}(\CC_1^{n+1})$. It turns out that $\GG_{1,1}(\CC_1^{n+1})$ is a para-quaternionic K\"ahler manifold, as it is shown in Section \ref{Twist}. We set three \textit{twistor spaces} $M_{Z}^s$, $s\in\{+,-,0\}$, which are suitable real or circle bundles over $M_G=\GG_{1,1}(\CC_1^{n+1})$. In Theorem \ref{thm-cv-CHn}, we obtain that, indeed,  these twistor spaces can be regarded as our families of parallel curves of constant curvature, lying inside totally geodesic $\mathbb{CH}^1\subset \CHn$. 
\par 
On the other hand, we recall from \cite{CK} the construction of a Gauss map for real hypersurfaces $M^{2n-1}$ in complex hyperbolic space $\CHn$ to $\GG_{1,1}(\CC_1^{n+1})$. In particular, when $M$ is Hopf, the rank of its Gauss map is $2n-2$, having nice properties related to  the para-quaternionic K\"ahler structure of $\GG_{1,1}(\CC_1^{n+1})$. Moreover, $\GG_{1,1}(\CC_1^{n+1})$ can be identified with the set of totally geodesic, complex hyperbolic lines $\mathbb{CH}^1$ in $\CHn$. Next, for each Hopf hypersurface $M^{2n-1}$ in $\CHn$, it is possible to obtain a horizontal submanifold $\Sigma^{2n-2}$ with respect to one of the twistor bundles $\pi_Z^s:M_Z^s\to M_G$ $(s\in\{+,-,0\})$, cf. \cite{K2,K3}. Theorem \ref{th-construction} is somehow a converse of this result. Indeed, starting from a $2n-2$ submanifold $\Sigma$ of  a twistor space $M_Z^s$, which is horizontal with respect to the twistor bundle $\pi_Z^s:M_Z^s\to M_G$ $(s\in\{+,-,0\})$, we construct a \textit{parallel} family of Hopf real hypersurfaces in $\CHn$, satisfying that $\vert\mu\vert>2$ for $s=+$, $\vert\mu\vert<2$ for $s=-$, and $\vert\mu\vert=2$ for $s=0$. 
\par 
In Section \ref{Examples}, we show how to recover the classical examples of Hopf real hypersurfaces in $\CHn$ from our point of view. We remark that, as far as we know, the only known example with $\vert\mu\vert=2$ is the horosphere. A remarkable, intriguing and long-standing problem since \cite{Mo} is the existence of other Hopf real hypersurfaces in $\CHn$ with $\mu=2$. We solve it in Section \ref{newexamples}, where we develop a method to construct plenty of such examples, summarized in Theorem \ref{Axi2xi}. For $n=2$, we also obtain a characterization of the horosphere in Theorem \ref{horo}.

\section{Real hypersurfaces in $\CHn$}
\label{RH}

We begin by  recalling a realization of the complex hyperbolic space $\mathbb{CH}^n$, 
according to \cite{KN}. In $\CC^{n+1}$ with  standard basis $\{e_0,e_1,\cdots,e_n\}$,  
we consider the Hermitian form 
\begin{equation}
((\zz,\ww))=-z_0\bar{w}_0+\sum_{k=1}^n z_k\bar{w}_k,
\label{Herm} 
\end{equation}
where $^t\zz=(z_0,z_1,\cdots,z_n), ^t\ww=(w_0,w_1,\cdots,w_n)\in \CC^{n+1}$. The corresponding scalar product is $\langle\zz,\ww\rangle = \mathrm{Re}((\zz,\ww))$. 

Let
\begin{align*}
 U(1,n) &=\{A\in GL(n+1,\mathbb{C})|\
 ((A\zz,A\ww))=((\zz,\ww)),\ \zz,\ww\in\mathbb{C}^{n+1}\}
\\
&=\{A\in GL(n+1,\mathbb{C})|\ A^*SA=S\},
\end{align*}
where
\begin{equation}
S=
\begin{pmatrix}
 -1 & 0 \\ 0 & E_n
\end{pmatrix}.
 \label{S}
\end{equation}
Then \eqref{Herm} and \eqref{S} imply $((\zz,\ww))=\,^t\zz S \bar{\ww}$.
If $u_0,u_1,\cdots,u_n$ are the column vectors of 
$A=(u_0,u_1,\cdots,u_n)\in U(1,n)$, we have  
\[
((u_0,u_0))=-1,\quad ((u_j,u_j))=1 \ (j=1,\cdots,n),\quad
((u_j,u_k))=0\ (j\not=k).
\]  
It is known that $U(1,n)$ is connected, and its Lie algebra is
\begin{align*}
 \mathfrak{u}(1,n) &=
\{X\in \mathfrak{gl}(n+1,\mathbb{C})|\ X^*S+SX=O\}
\\
&=\left\{
\begin{pmatrix}
 i\lambda & \mathbf{z}^* \\ \zz & B
\end{pmatrix}
|\ \lambda\in\RR,\ \zz\in\mathbb{C}^n,\ B\in \mathfrak{u}(n)
\right\}.
\end{align*}
Consider the real hypersurface \[H_1^{2n+1}=\{w\in \CC^{n+1} : ((w,w))=-1\},\] which is the \textit{anti de Sitter space} of constant sectional curvature $-1$. The group $U(1,n)$ acts transitively on it by isometries. The group $S^1=\{e^{i\theta}\}$ acts freely on $H_1^{2n+1}$ by $\ww\mapsto e^{i\theta}\ww$. The base manifold of the principal fiber bundle 
\[\pi_{-}: H_1^{2n+1}\to \CHn\] with group $S^1$ is the complex hyperbolic space. This is the well-known \textit{Hopf fibration}.  For $\ww\in H_1^{2n+1}$, the tangent space is represented by 
\[
T_{\ww}H_1^{2n+1}=\{\zz\in \CC^{n+1}|\ \langle \zz,\ww\rangle =0\}. 
\]
In particular, $i\ww\in T_\ww H_1^{2n+1}$. Let $T'_\ww$ be the subspace of 
$T_\ww H_1^{2n+1}$ defined by $T'_\ww=\{\zz\in \CC^{n+1}|\ ((\zz,\ww))=0\}.$
The restriction of $\langle,\rangle=\mathrm{Re}((\ ,\ ))$ to $T'_\ww$ is positive-definite. Since $T'_\ww$ is horizontal with respect to $\pi_{-}$, we have a connection on the fiber bundle. Then  $(\pi_{-})_*$ induces a linear isomorphism from $T'_\ww$ to $T_{\pi_{-}(\ww)}\CHn$ and the complex structure $\zz\mapsto i\zz$ on $T'_\ww$, $\ww\in H_1^{2n+1}$, is compatible with the action of $S^1$, so that it induces an almost complex structure $J'$ on $\CHn$ such that  $(\pi_{-})_*i=J'(\pi_{-})_*$.  Given $X\in T_p\CHn$, its \textit{horizontal lift} will be denoted by $X'\in T'_\ww$. Given $X\in T_zH_1^{2n+1}$, its \textit{horizontal part} is the orthogonal projection of $X$ into $T'_\ww$, and we will denote it by $\mathcal{H}X=X+\mathrm{Re}((X,i\ww))i\ww$. 
Next, by abuse of notation, we define on $\CHn$ a Riemannian metric by 
\[
\langle X,Y\rangle=\operatorname{Re} ((X',Y')),
\] 
where $X',Y'\in T'_\ww$ are horizontal lifts of $X,Y\in T_p\CHn$, $\pi_{-}(\ww)=p$. 
The metric $\langle\ ,\ \rangle$ is Hermitian with respect to $J'$ and $\mathbb{CH}^n$ has constant holomorphic sectional curvature $-4$.
\par
Next, we recall some facts about real hypersurfaces in complex space forms.
Let $\widetilde{M}^n(c)$ be a space of constant holomorphic sectional 
curvature $4c$ with real dimension $2n$ and Levi-Civita connection 
$\tilde{\nabla}$.  For a real hypersurface $M^{2n-1}$ in $\widetilde{M}$, 
the Levi-Civita connection $\nabla$ of the induced metric and 
the shape operator $A$ are characterized by
\[\tilde{\nabla}_X Y=\nabla_X Y+\langle AX,Y\rangle N, \quad 
\tilde{\nabla}_X N=-AX, 
\]
where $N$ is a local choice of unit normal. Let $J:T\widetilde{M}\rightarrow \wtM$ be
the complex structure with properties $J^2=-1$, $\widetilde{\nabla}J=0$, 
and $\langle JX,JY\rangle=\langle X,Y\rangle$.
\par
We define the \textit{structure vector} of $M$ by $\xi=-JN$.
Clearly $\xi\in TM$ and $|\xi|=1$. Define a skew-symmetric $(1,1)$-tensor $\phi$ from the tangent 
projection of $J$ by
\[ JX=\phi X+\eta(X)N, \]
where $\eta$ is the $1$-form on $M$ defined by $\eta(X)=\langle \xi,X\rangle$.
Therefore, 
\[
\phi^2X=-X+\eta(X)\xi,\quad \langle\phi X,\phi Y\rangle=\langle X,Y\rangle 
-\eta(X)\eta(Y),\quad \phi\xi=0.
\]
$(\phi,\xi,\eta,\langle\ ,\ \rangle)$ determines an almost-contact 
metric structure on $M$. Since $\tilde{\nabla}J=0$, 
\[
(\nabla_X \phi)Y=\eta(Y)AX-\langle AX,Y\rangle\xi, \quad
\nabla_X\xi=\phi AX.
\]
\par
If $\xi$ is a principal vector, i.e., $A\xi=\mu\xi$ holds for the 
shape operator of $M$ in $\wtM$, $M$ is called a \textit{Hopf hypersurface}
and $\mu$ is called \textit{Hopf curvature}. 
The following fact is proved by Y.~Maeda \cite{Mae} and Ki-Suh \cite{KS} 
(cf. \cite{NR}, Corollary 2.3 (i)).
\begin{proposition} \label{Prop-Hopf}
Let $M^{2n-1}$ $(n\ge 2)$ be a connected Hopf hypersurface in a complex space
form of constant holomorphic sectional curvature $4c\not=0$, 
and let $\mu$ be its Hopf curvature. Then $\mu$ must be a constant. 
Furthermore if $X\perp\xi$ and $AX=\lambda X$, then 
\begin{equation}
\left(\lambda-\frac{\mu}{2}\right)A\phi X=
\left(\frac{\lambda\mu}{2}+c\right)\phi X.
\label{Hopf-pc} 
\end{equation}
Hence 
\begin{equation}
 A\phi X=\left(\frac{\lambda\mu+2c}{2\lambda-\mu}\right)\phi X\quad
\left(\text{provided}\quad \lambda-\frac{\mu}{2}\not=0\right).
\label{Hopf-pc2}
\end{equation}
\end{proposition}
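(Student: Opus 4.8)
The plan is to extract everything from the Codazzi equation of $M\subset\widetilde M^{n}(c)$, which, since the ambient space is a complex space form of constant holomorphic sectional curvature $4c$, reads
\[
(\nabla_X A)Y-(\nabla_Y A)X=c\bigl(\eta(X)\phi Y-\eta(Y)\phi X-2\langle\phi X,Y\rangle\xi\bigr),
\]
together with the structure formulas $\nabla_X\xi=\phi AX$ and $(\nabla_X\phi)Y=\eta(Y)AX-\langle AX,Y\rangle\xi$ recalled above. First I record two preliminary facts. Differentiating the Hopf condition $A\xi=\mu\xi$ in the direction $X$ and using $\nabla_X\xi=\phi AX$ gives
\[
(\nabla_X A)\xi=(X\mu)\,\xi+\mu\,\phi AX-A\phi AX ,
\]
whence $(\nabla_\xi A)\xi=(\xi\mu)\,\xi$ because $\phi\xi=0$; and, since $A$ is symmetric and $\phi$ skew-symmetric, the operators $A\phi A$ and $\phi A+A\phi$ are skew-symmetric while $\phi A-A\phi$ is symmetric. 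I will also use repeatedly that each $(1,1)$-tensor $\nabla_Z A$ is symmetric.

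I would obtain the principal-curvature relations first, since they do not need the constancy of $\mu$. Putting $Y=\xi$ in the Codazzi equation and using $\phi\xi=0$ gives $(\nabla_X A)\xi-(\nabla_\xi A)X=-c\,\phi X$; combined with the formula for $(\nabla_X A)\xi$ above this yields
\[
(\nabla_\xi A)X=(X\mu)\,\xi+\mu\,\phi AX-A\phi AX+c\,\phi X .
\]
Taking the $\xi$-component of this identity, with $(\nabla_\xi A)\xi=(\xi\mu)\xi$ and the symmetry of $\nabla_\xi A$, already forces $d\mu=(\xi\mu)\,\eta$, i.e.\ $\operatorname{grad}\mu=(\xi\mu)\,\xi$. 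Now I impose $\langle(\nabla_\xi A)X,Y\rangle=\langle X,(\nabla_\xi A)Y\rangle$ in the last display: the $(X\mu)$-terms cancel thanks to $d\mu=(\xi\mu)\eta$, the $A\phi A$-terms reinforce by skew-symmetry, and $\langle\phi\,\cdot,\cdot\rangle=-\langle\cdot,\phi\,\cdot\rangle$ turns what is left into the tensorial identity $2A\phi A=\mu(\phi A+A\phi)+2c\,\phi$. Evaluating it on $X\perp\xi$ with $AX=\lambda X$ and using $A\xi=\mu\xi$ gives $(2\lambda-\mu)A\phi X=(\lambda\mu+2c)\phi X$, which is \eqref{Hopf-pc}; dividing by $2\lambda-\mu$ when it is nonzero gives \eqref{Hopf-pc2}.

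For the constancy of $\mu$, I would exploit $\operatorname{grad}\mu=(\xi\mu)\xi$ together with the symmetry of $\operatorname{Hess}\mu$ (equivalently $d(d\mu)=0$). Writing $f=\xi\mu$, one has $\operatorname{Hess}\mu(X,Y)=\langle\nabla_X(f\xi),Y\rangle=(Xf)\eta(Y)+f\langle\phi AX,Y\rangle$; imposing its symmetry, first with $X=\xi$, gives $\operatorname{grad}f=(\xi f)\xi$, after which the $\eta$-terms drop out identically and the residual equation forces $f\,(\phi A+A\phi)=0$. Hence on the open set $U=\{f\neq 0\}$ we have $A\phi=-\phi A$, and substituting this into \eqref{Hopf-pc} for $AX=\lambda X$, $X\perp\xi$, collapses it to $\lambda^{2}=-c$ for every such principal curvature. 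If $c>0$ this is absurd, so $U=\varnothing$ and $\mu$ is constant. If $c<0$ there remains only the rigid possibility that $A|_{\xi^{\perp}}$ has exactly the eigenvalues $\pm\sqrt{-c}$, interchanged by $\phi$ (so $A^{2}=-c\operatorname{Id}$ on $\xi^{\perp}$); feeding this structure back into the Codazzi equation along the integral curves of $\xi$ — which are geodesics of $M$ because $\nabla_\xi\xi=\mu\,\phi\xi=0$ — and, if necessary, into the Gauss equation, one derives $f\equiv 0$ once more. I expect this last exclusion to be the only genuinely non-formal step and the real obstacle; it is precisely the computation carried out in \cite{Mae} and \cite{KS}. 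One also notes in passing that $\lambda=\mu/2$ with \eqref{Hopf-pc} would force $\mu^{2}=-4c$, so on $U$ no principal curvature equals $\mu/2$, which legitimizes the division leading to \eqref{Hopf-pc2}.
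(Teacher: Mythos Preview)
The paper does not give its own proof of this proposition: it is stated as a known fact with attribution to Maeda \cite{Mae}, Ki--Suh \cite{KS}, and Niebergall--Ryan \cite{NR}. Your write-up is precisely the standard argument found in those references (in particular \cite{NR}), so in that sense you are aligned with the paper's treatment.

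Your derivation of the tensorial identity $2A\phi A=\mu(\phi A+A\phi)+2c\phi$ from Codazzi and the symmetry of $\nabla_\xi A$, and hence of \eqref{Hopf-pc}--\eqref{Hopf-pc2}, is correct and complete. The reduction $\operatorname{grad}\mu=(\xi\mu)\xi$ and the Hessian symmetry argument yielding $(\xi\mu)(\phi A+A\phi)=0$ are also correct, and this already finishes the constancy of $\mu$ when $c>0$.

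The one genuine gap is exactly the one you flag: when $c<0$ you are left with the rigid configuration $A\phi=-\phi A$, $\lambda^2=-c$ on $\xi^\perp$, and you do not actually exclude $\xi\mu\neq 0$ there. Two small remarks on that. First, the reference you need is \cite{KS} alone; Maeda \cite{Mae} treats only $c>0$, which your own argument already covers. Second, the sentence ``feeding this structure back into the Codazzi equation along the integral curves of $\xi$ \ldots\ one derives $f\equiv 0$'' is not a proof sketch: differentiating the identities you have along $\xi$ (using $\nabla_\xi\phi=0$) gives only tautologies, so additional Codazzi components with $X,Y\perp\xi$ and the full eigenbundle structure are genuinely needed, as in \cite{KS} or \cite{NR}. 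If you want a self-contained argument, that is the place to supply detail; otherwise, citing \cite{KS} (or \cite{NR}, Theorem~2.1/Corollary~2.3) for that step is exactly what the paper itself does.
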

We note that $\mu$ is not constant for Hopf hypersurfaces in $\CC^{n+1}$ 
in general (\cite{NR}, Theorem 2.1). In fact, in \eqref{Hopf-pc} if $\lambda=\mu/2$, then 
$\mu^2+4c=0$ and $c<0$. 
\begin{corollary}
Let $M^{2n-1}$ $(n\ge 2)$ be a Hopf hypersurface with Hopf curvature
$\mu$ in a complex space form of constant holomorphic sectional curvature $4c\not=0$.
If there exists a (locally defined) vector field $X\not=0$ 
orthogonal to $\xi$ with $AX=(\mu/2) X$, then $c<0$, $|\mu|=2\sqrt{-c}$ and $|\lambda|=\sqrt{-c}$.
In particular, when either $c>0$ or $c<0$ with $|\mu|\not=2\sqrt{-c}$,
there exists no tangent vector field $X\not=0$ orthogonal to $\xi$ 
satisfying $AX=(\mu/2) X$. 
\end{corollary}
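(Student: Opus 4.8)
The plan is to derive everything from the single identity \eqref{Hopf-pc} of Proposition \ref{Prop-Hopf}. Since $M$ is Hopf with $n\ge 2$, the Hopf curvature $\mu$ is a well-defined constant and \eqref{Hopf-pc} applies to any $X\perp\xi$ with $AX=\lambda X$. Under our hypothesis we are precisely in the borderline case $\lambda=\mu/2$: the vector field $X\neq 0$ is orthogonal to $\xi$ and satisfies $AX=(\mu/2)X$. Plugging $\lambda=\mu/2$ into \eqref{Hopf-pc}, the coefficient $\lambda-\mu/2$ on the left-hand side vanishes, so the identity collapses to $\bigl(\tfrac{\mu^2}{4}+c\bigr)\phi X=0$.

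The next step is to rule out $\phi X=0$. From the almost-contact identities recalled in Section \ref{RH}, $\phi^2 X=-X+\eta(X)\xi$, and here $\eta(X)=\langle\xi,X\rangle=0$ because $X\perp\xi$; hence $\phi^2 X=-X\neq 0$, which forces $\phi X\neq 0$. Therefore the scalar factor must vanish: $\mu^2+4c=0$. Since $\mu^2\ge 0$ and $c\neq 0$, this already implies $c<0$, and then $|\mu|=2\sqrt{-c}$. Finally, from $\lambda=\mu/2$ we get $|\lambda|=|\mu|/2=\sqrt{-c}$, which is the claimed list of conclusions.

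For the ``in particular'' part I would simply take the contrapositive of what has just been shown: the existence of such an $X$ entails $c<0$ and $\mu^2=-4c$. Hence if $c>0$, or if $c<0$ but $|\mu|\neq 2\sqrt{-c}$ (equivalently $\mu^2\neq -4c$), then no nonzero tangent vector field $X$ orthogonal to $\xi$ can satisfy $AX=(\mu/2)X$.

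There is essentially no serious obstacle here: the only points requiring a moment's care are (i) that Proposition \ref{Prop-Hopf} guarantees $\mu$ is constant and that \eqref{Hopf-pc} is genuinely available (both need $n\ge 2$), and (ii) the observation that $\phi X\neq 0$ whenever $0\neq X\perp\xi$, which is immediate from $\phi^2X=-X$ on $\xi^\perp$. The rest is a direct specialization of \eqref{Hopf-pc} to the case $\lambda=\mu/2$, exactly as anticipated in the remark following Proposition \ref{Prop-Hopf}.
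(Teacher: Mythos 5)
Your proof is correct and follows exactly the route the paper intends: specializing \eqref{Hopf-pc} to $\lambda=\mu/2$, noting $\phi X\neq 0$ for $0\neq X\perp\xi$, and concluding $\mu^2+4c=0$, which is precisely the remark the authors make immediately after Proposition \ref{Prop-Hopf}. Nothing to add.
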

\begin{corollary}
Let $M^{2n-1}$ $(n\ge 2)$ be a Hopf hypersurface with Hopf curvature $\mu$ in a complex hyperbolic space $\CHn(-4)$. Take $X$ a principal vector such that $X\perp\xi$,  $AX=\lambda X$ and $A\phi X=\lambda^*\phi X$. Then:
\begin{enumerate}
\item In the case $|\mu|>2$, we put $\mu=2\coth 2r$ for some $r\not=0$.
\begin{enumerate}
\item If $|\lambda|>1$, then $|\lambda^*|>1$, $\lambda=\coth(r+\theta)$, $\lambda^*=\coth(r-\theta)$ for some $\theta\in\RR$. 

\item If $|\lambda|<1$, then $|\lambda^*|<1$, $\lambda=\tanh(r+\theta)$, $\lambda^*=\tanh(r-\theta)$, for some $\theta\in\RR$.

\item If $\lambda=\pm 1$, then $\lambda^*=-\lambda$. 

\item Given the eigenvalues $\lambda_1,\cdots,\lambda_{2n-2}$ of 
$A|_{\{\xi\}^\perp}$, the numbers $\#\{i|\ |\lambda_i|>1\}$ and $\#\{i|\ |\lambda_i|<1\}$ 
are even.
\end{enumerate}
\item In the case $|\mu|<2$, we put $\mu=2\tanh 2r$ for some $r\in\RR$. 
\begin{enumerate}
\item If $|\lambda|>1$, then $|\lambda^*|<1$, $\lambda=\coth(r+\theta)$, $\lambda^*=\tanh(r-\theta)$ for some $\theta\in\RR$ (and vice versa). 
\item If $\lambda=\pm 1$, then $\lambda^*=-\lambda$. 
\item Given the eigenvalues $\lambda_1,\cdots,\lambda_{2n-2}$ of 
$A|_{\{\xi\}^\perp}$, the numbers  $\#\{i|\  |\lambda_i|>1\}$ and $\#\{i |\: |\lambda_i|<1\}$ are equal.
 \end{enumerate}
\item In the case $\mu=\pm 2$, if $\lambda\neq \mu/2$, then $\lambda^*=\pm 1$, that is, 
$A\phi X=\pm \phi X$. 
\end{enumerate}
\end{corollary}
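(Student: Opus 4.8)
The plan is to deduce everything from Proposition~\ref{Prop-Hopf} with $c=-1$. For a principal vector $X\perp\xi$ with $AX=\lambda X$ and $\lambda\neq\mu/2$, \eqref{Hopf-pc2} gives $A\phi X=\lambda^*\phi X$ with
\[
\lambda^*=\frac{\lambda\mu-2}{2\lambda-\mu},
\]
and one checks that $\lambda\mapsto\lambda^*$ is an involution (this also follows from $A\phi(\phi X)=(\lambda^*)^*\phi(\phi X)$ together with $\phi^2X=-X$ for $X\perp\xi$). Before the computations I would record that the first of the two corollaries above forbids the eigenvalue $\mu/2$ on $\{\xi\}^\perp$ whenever $|\mu|\neq2$; hence in parts (1) and (2) the scalar $\lambda^*$ is automatically defined, while in part (3) it is defined by hypothesis. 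I would also keep an eye on the denominators $2\lambda-\mu$, $2r-t$, $r-\theta$ appearing below, whose non-vanishing is precisely the condition $\lambda\neq\mu/2$.

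For the explicit formulas, the idea is to substitute the prescribed parametrization of $\mu$ together with a matching one for $\lambda$, and then collapse $\lambda^*$ by a hyperbolic subtraction identity. In (1), with $\mu=2\coth 2r$ ($r\neq0$): if $|\lambda|>1$ write $\lambda=\coth t$, and $\coth(a-b)=\dfrac{\coth a\coth b-1}{\coth b-\coth a}$ turns $\lambda^*$ into $\coth(2r-t)$, so $\theta:=t-r$ yields $\lambda=\coth(r+\theta)$, $\lambda^*=\coth(r-\theta)$ and $|\lambda^*|>1$; if $|\lambda|<1$ the same computation with $\tanh$ in place of $\coth$ (using $\tanh(a-b)=\dfrac{\tanh a-\tanh b}{1-\tanh a\tanh b}$) gives $\lambda=\tanh(r+\theta)$, $\lambda^*=\tanh(r-\theta)$, $|\lambda^*|<1$. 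Repeating both substitutions with $\mu=2\tanh 2r$ gives (2): from $|\lambda|>1$ one reaches $\lambda^*=\tanh(2r-t)$ (so $|\lambda^*|<1$), and from $|\lambda|<1$, writing $\lambda=\tanh s$, one reaches $\lambda^*=\coth(2r-s)$ (so $|\lambda^*|>1$), which is the asserted interchange of the two regimes; setting $\theta$ equal to $t-r$ or $s-r$ produces the displayed expressions. The endpoint cases are one-line substitutions: for $|\mu|>2$ (resp. $|\mu|<2$), $\lambda=\pm1$ gives $\lambda^*=\mp1$, which is (1)(c) (resp. (2)(b)); for (3), $\mu=2$ with $\lambda\neq1$ gives $\lambda^*=\dfrac{2\lambda-2}{2\lambda-2}=1$, and $\mu=-2$ with $\lambda\neq-1$ gives $\lambda^*=-1$.

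For the counting statements (1)(d) and (2)(c) I would use that $\phi$ restricts to a complex structure on $\{\xi\}^\perp$: $\phi\xi=0$ with $\phi$ skew-symmetric gives $\phi(\{\xi\}^\perp)\subseteq\{\xi\}^\perp$, and the identity $\phi^2X=-X+\eta(X)\xi$ gives $\phi^2=-\mathrm{id}$ on $\{\xi\}^\perp$. Since $M$ is Hopf, $\{\xi\}^\perp$ is $A$-invariant, and $A\phi X=\lambda^*\phi X$ whenever $AX=\lambda X$ shows that $\phi$ sends the $\lambda$-eigenspace into the $\lambda^*$-eigenspace. Write $W_{>1}$ (resp. $W_{<1}$) for the sum of the eigenspaces of $A|_{\{\xi\}^\perp}$ whose eigenvalue has absolute value $>1$ (resp. $<1$). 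In case (1), $|\lambda|>1\Leftrightarrow|\lambda^*|>1$, so $W_{>1}$ is $\phi$-invariant, hence carries the complex structure $\phi|_{W_{>1}}$ and is even-dimensional; likewise $W_{<1}$. In case (2), $\phi$ maps $W_{>1}$ into $W_{<1}$ and $W_{<1}$ into $W_{>1}$, both injectively (as $\phi$ is injective on $\{\xi\}^\perp$), so $\dim W_{>1}=\dim W_{<1}$.

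The proof is mostly bookkeeping and has no single hard step; the part needing the most care is the clean choice of $\theta$ in parts (1) and (2) — that is, invoking the bijectivity of $\coth\colon\RR\setminus\{0\}\to\{\,|x|>1\,\}$ and $\tanh\colon\RR\to(-1,1)$, and checking sign conventions when $\mu<0$ — together with the systematic verification that the excluded value $\lambda=\mu/2$ never appears. Everything else reduces to the two hyperbolic subtraction identities above and the fact that a real vector space admitting a complex structure is even-dimensional.
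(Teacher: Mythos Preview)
Your argument is correct and is exactly the computation the paper has in mind: the corollary is stated without proof in the paper, as a direct consequence of Proposition~\ref{Prop-Hopf} (with $c=-1$) and the preceding corollary, and you carry out precisely that derivation. The only thing to add is that the paper leaves the details implicit, so your write-up in fact supplies more than the paper does; your handling of the counting statements via the $\phi$-invariance (case~(1)) or $\phi$-interchange (case~(2)) of the eigenspace blocks is the natural argument and matches the spirit of the result.
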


A fundamental result for Hopf hypersurfaces in complex projective space 
$\CPn$ was proved by Cecil-Ryan in \cite{CR}. They considered a Hopf real hypersurface $M$ with $\mu=2 \cot 2r$, for some $0<r<\pi/2$, and such that its focal map has constant rank $q$. Roughly speaking, they proved that  every point of $M$ admits an open neighbourhood $U$ such that $U$ lies on a tube of radius $r$ over a complex submanifold of $\CPn$. The corresponding result in $\CHn$ for \textit{large} Hopf curvatures was proved by Montiel, \cite{Mo}:
\begin{theorem}	\label{th-Montiel}
Let $M$ be an orientable Hopf hypersurface of $\CHn$ 
and we assume that its focal map $\phi_{r}$ $(r>0)$ has constant rank $q$ on M. Then  if
$\mu=2\coth 2r>2$ (for some $r>0$), for each $x_{0}\in M$ there exists 
an open neighbourhood $U$ of $x_{0}$ such that $\phi_{r}U$ is a 
$(q/2)$-dimensional complex submanifold embedded in $\CHn$. 
Moreover $U$ lies in a tube of radius $r$ over $\phi_{r}U$.
\end{theorem}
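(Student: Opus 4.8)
The plan is to follow the classical focal-map strategy adapted to the hyperbolic setting, mimicking Cecil--Ryan's argument in $\CPn$ but replacing trigonometric with hyperbolic functions, exactly as Montiel did. First I would recall the structure of the focal map $\phi_r:M\to\CHn$, defined (after lifting to the anti-de Sitter space $H_1^{2n+1}$ via $\pi_{-}$ and using the exponential map of $\CHn$, which has an explicit hyperbolic form because $\CHn$ is a symmetric space) by moving a distance $r$ along the normal geodesic $\gamma_{N}$; concretely $\phi_r(x)=\exp_x(rN_x)$. The key computational input is Proposition \ref{Prop-Hopf}: since $M$ is Hopf with $\mu=2\coth 2r$, the eigenvalue equation \eqref{Hopf-pc2} with $c=-1$ tells us precisely how the other principal curvatures $\lambda$ propagate, and the relevant point is that $\xi$ is a principal direction with $A\xi=\mu\xi=2\coth(2r)\,\xi$. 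Along the normal geodesic, the shape operators of the parallel hypersurfaces evolve by a Riccati equation whose solution in terms of Jacobi fields shows that $\xi$ (the direction $-JN$) becomes degenerate for the focal map exactly at distance $r$ — this is the role of the condition $\mu=2\coth 2r$.

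Next I would compute the rank and the kernel of $(\phi_r)_*$ at each point. Decomposing $T_xM=\RR\xi\oplus\{\xi\}^\perp$ and further decomposing $\{\xi\}^\perp$ into the $\phi$-invariant eigenspaces of $A$, one evaluates $(\phi_r)_*$ on each piece using the Jacobi field expressions: on $\RR\xi$ the map $(\phi_r)_*$ vanishes (because $\mu=2\coth 2r$), while on an eigenvector $X\perp\xi$ with $AX=\lambda X$ the differential $(\phi_r)_*X$ is a nonzero multiple of a parallel-transported vector unless $\lambda$ hits the focal value; by hypothesis the rank is the constant $q$, so the kernel distribution $\mathcal D=\ker(\phi_r)_*$ has constant dimension $2n-1-q$ and always contains $\xi$. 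Then I would show $\mathcal D$ is integrable with totally geodesic leaves that are mapped by $\phi_r$ to points — this is the standard focal-leaf argument: the leaves of $\mathcal D$ are pieces of the "focal submanifolds through the normal geodesics," and one checks using the Codazzi equation and the Hopf condition that $\mathcal D$ is parallel along itself. Consequently $\phi_r$ locally factors through the leaf space, which is a manifold $P$ of dimension $q$.

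Then I would establish that $P=\phi_r U$ is a complex submanifold. The point is that $\mathcal D$ contains $\xi=-JN$; hence the normal space to $\phi_r U$ at a point, which is spanned by the images under parallel transport of the vectors killed by $(\phi_r)_*$ together with the radial direction, is $J$-invariant; equivalently the tangent space to $\phi_r U$ is $J$-invariant, so $\phi_r U$ is an (immersed, and after shrinking $U$, embedded) complex submanifold of $\CHn$ of complex dimension $q/2$ — in particular $q$ is even. One verifies $J$-invariance by differentiating the relation $\xi=-JN$ along $\mathcal D$ and using $\widetilde\nabla J=0$ together with $\nabla_X\xi=\phi AX$ and the Hopf equations; the vectors normal to $\phi_r U$ are exactly the parallel translates along $\gamma_N$ of $N_x$ and of the $A$-eigenvectors with the "focal" eigenvalue, and $J$ preserves this set. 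Finally, the statement that $U$ lies in the tube of radius $r$ over $\phi_r U$ follows by reversing the construction: from each point $p\in\phi_r U$ one flows back distance $r$ along the unit normals of $\phi_r U$ in $\CHn$, and the Hopf/focal structure guarantees these normal geodesics sweep out exactly $U$; this is the inverse of the focal map and is a diffeomorphism after shrinking.

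The main obstacle I expect is controlling the global-to-local passage cleanly: the constant-rank hypothesis gives a well-defined smooth leaf space only locally, and one must be careful that the leaves of $\mathcal D$ close up to genuine totally geodesic submanifolds of $\CHn$ (here one uses that $\CHn$ is a space form, so the relevant Jacobi equation has constant coefficients and the focal leaves are totally geodesic $\CHn$-subspaces or horospheres depending on the eigenvalue regime), and that the tube map is an immersion with the correct rank — this requires verifying that no \emph{further} focal points occur between $0$ and $r$ along $\gamma_N$, which in turn follows from the sign analysis of the principal curvatures forced by \eqref{Hopf-pc2} with $c=-1$ and $\mu>2$ (all the relevant hyperbolic cotangent/tangent factors stay away from their singularities on $(0,r)$). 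Once that is in place, the computation itself is a routine Jacobi-field bookkeeping exercise parallel to \cite{CR} and \cite{Mo}.
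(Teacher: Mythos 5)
Theorem \ref{th-Montiel} is quoted in the paper as Montiel's result and is cited to \cite{Mo} without proof, so there is no in-paper argument to compare against; your sketch is the standard Cecil--Ryan focal-map/Jacobi-field argument in its hyperbolic incarnation, and it is essentially correct: the Jacobi field along $\gamma_N$ in the holomorphic plane $\operatorname{span}\{N,\xi\}$ (curvature $-4$) vanishes at $t=r$ exactly when $\mu=2\coth 2r$, the kernel of $(\phi_r)_*$ is $\RR\xi\oplus V_{\coth r}$, and the $\phi$-invariance of $V_{\coth r}$ forced by \eqref{Hopf-pc2} with $c=-1$ together with $\widetilde{\nabla}J=0$ gives the $J$-invariance of the normal space of $\phi_r U$. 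One minor simplification: you do not need integrability or total geodesy of the kernel distribution — the constant-rank theorem alone makes $\phi_r U$ a $q$-dimensional embedded submanifold locally, after which the complex-structure and tube statements follow as you describe.
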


On the other hand, with respect to Hopf hypersurfaces with 
\textit{small} Hopf curvature in $\CHn$, Ivey \cite{I} and 
Ivey-Ryan \cite{IR} proved that a Hopf hypersurface with
$|\mu|<2$ in $\CHn$ may be constructed from an arbitrary pair of 
Legendrian submanifolds in $S^{2n-1}$.

\section{Para-quaternionic K\"ahler manifolds}
\label{PQK} 

We review basic definitions and facts on 
a para-quaternionic K\"ahler manifold 
(cf. \cite{Mar}, \cite{Ts}). First, we recall real Clifford algebras (cf. \cite{H, Mar}) and 
(para-)quaternionic structures (cf. \cite{Mar}).
Let $(V=\RR(p,q),\langle\ ,\ \rangle)$ be a real symmetric inner product space 
of signature $(p,q)$. The Clifford algebra $C(p,q)$ is the quotient 
$\otimes V/I(V)$, where $I(V)$ is the two-sided ideal in $\otimes V$ 
generated by all elements $x\otimes x+\langle x,x\rangle$ with $x\in V$. The Clifford algebras with $p+q=1$ or $2$ are given as:
\begin{enumerate}
\item $\CC=C(0,1)$, \textit{complex numbers}: 
$z=x+iy$ $(x,y\in\RR)$, $i^2=-1$, $|z|^2=x^2+y^2$, there are no zero divisors.
\item $\widetilde{\CC}=C(1,0)$, \textit{split-complex numbers}: 
$z=x+jy$ $(x,y\in\RR)$, $j^2=1$, $|z|^2=x^2-y^2$, there exist zero divisors.
\item $\HH=C(0,2)$, \textit{quaternions}:
$q=q_0+iq_1+jq_2+kq_3$ $(q_0,q_1,q_2,q_3\in\RR)$, $i^2=j^2=k^2=ijk=-1$, 
$|q|^2=q\bar{q}=q_0^2+q_1^2+q_2^2+q_3^2$,
there are no zero divisors,
$\HH\cong \CC^2$, $q=z_1(q)+jz_2(q)$.
\item $\widetilde{\HH}=C(2,0)=C(1,1)$, \textit{split-quaternions}:
$q=q_0+iq_1+jq_2+kq_3$ $(q_0,q_1,q_2,q_3\in\RR)$, $i^2=-1$, $j^2=k^2=ijk=1$, 
$|q|^2=q_0^2+q_1^2-q_2^2-q_3^2$,
there exist zero divisors,
$\widetilde{\HH}\cong \CC^2$, $q=z_1(q)+jz_2(q)$.
\end{enumerate}
The following natural correspondences are well-known:
(i) the split-complex numbers $\widetilde{\CC}$ and the Minkowski plane $\RR^2_1$, 
and (ii) the split-quaternions $\widetilde{\HH}$ and the 
Euclidean 4-space $\RR^4_2$ of signature $(2,2)$. 
\par 
Next let $(\wtM^{4m},\tilde{g},\tilde{Q})$, $m\ge 2$,  be a \textit{para-quaternionic}
K\"ahler manifold with the para-quaternionic K\"ahler structure
$(\tilde{g},\tilde{Q})$, that is to say, $\tilde{g}$ is a pseudo-Riemannian metric 
of neutral signature $(2m,2m)$ on $\wtM$ and $\tilde{Q}$ is a rank 3 subbundle of 
$\text{End} T\wtM$ satisfying	 the following conditions:
\begin{enumerate}
\item
For each $p\in\wtM$ , there exist a neighbourhood $U$ of $p$ and a (local) frame field  $\{\tilde{I}_1,\tilde{I}_2,\tilde{I}_3\}$ of $\tilde{Q}$ defined on $U$ satisfying
\begin{gather*}
\tilde{I}_1^2=-1,\quad \tilde{I}_2^2=\tilde{I}_3^2=1,\quad 
\tilde{I}_1\tilde{I}_2=-\tilde{I}_2\tilde{I}_1=-\tilde{I}_3, \\
\tilde{I}_2\tilde{I}_3=-\tilde{I}_3\tilde{I}_2=\tilde{I}_1,\quad 
\tilde{I}_3\tilde{I}_1=-\tilde{I}_1\tilde{I}_3=-\tilde{I}_2.
\end{gather*}
\item For each  $L\in \tilde{Q}_p$, 
$\tilde{g}_p(LX,Y)+\tilde{g}_p(X,LY)=0$ for $X,Y\in T_p\wtM$, $p\in\wtM$.
\item The vector bundle $\tilde{Q}$ is parallel in $\operatorname{End} T\wtM$ 
with respect to the Levi-Civita connection 
$\widetilde{\nabla}$ associated with $\tilde{g}$.
\end{enumerate}
We note that $\tilde{Q}_p$ is naturally identified with Lie algebra 
$\su(1,1)$ and isometric to Minkowski $3$-space $\RR_1^3$ with respect to 
its Killing form:
$$
\tilde{Q}_p=\{\tilde{I}=aI_1+bI_2+cI_3|\ a,b,c\in\RR\}
\cong \su(1,1) \cong \RR_1^3,
$$ 
and
\begin{align}
 (\mathbf{S}_+)_p &:=\{\tilde{I}\in \tilde{Q}_p|\ \tilde{I}^2=1\}\cong S_1^2:\ 
(\text{de-Sitter $2$-space}), 
\label{S+}\\
 (\mathbf{S}_-)_p &:=\{\tilde{I}\in \tilde{Q}_p|\ \tilde{I}^2=-1,\ a>0\}\cong H^2:\ 
(\text{hyperbolic $2$-space}), 
\label{S-}\\
 (\mathbf{S}_0)_p &:=\{\tilde{I}\in \tilde{Q}_p|\ \tilde{I}^2=0,\
 \tilde{I}\not=0\}
\cong L^2-\{0\}:\ 
(\text{light cone}).
\label{S0}
\end{align}

We introduce now three \textit{twistor spaces} $\mathcal{Z}_s$ ($s=-,+,0$)
by using the para-quaternionic K\"ahler structure, such that $\mathcal{Z}_-$ (resp. $\mathcal{Z}_+$ 
and $\mathcal{Z}_0$) is a hyperbolic plane bundle (resp. de-Sitter surface bundle and 
light-cone bundle) over  $\bar{M}$. Each fiber 
of $\mathcal{Z}_s\rightarrow\bar{M}$ is identified with 
the set of almost complex (resp. almost product and nilpotent complex) 
structure $I$ with $I^2=-1$ (resp. $I^2=1$ and $I^2=0$) 
of the tangent space of $\bar{M}$, 
i.e., $I=a_1I_1+a_2I_2+a_3I_3$ with $(a_1,a_2,a_3)\in S^2$ and  
$-a_1^2+a_2^2+a_3^2=-1$ (resp. $-a_1^2+a_2^2+a_3^2=1$ and 
$-a_1^2+a_2^2+a_3^2=0$).
The cases $\mathcal{Z}_-$ and $\mathcal{Z}_+$ were studied by Alekseevsky-Cort\'es in \cite{AC}.

\section{Twistor spaces of indefinite complex $2$-plane Grassmannian}
\label{Twist}

We are going to identify the three previously said twistor spaces on 
$M_G=\GG_{1,1}(\CC_1^{n+1})$  with respect to its 
para-quaternionic K\"ahler structure. 

Given the usual scalar product in $\CC_1^{n+1}$ as in Section \ref{RH}, we define a scalar product (cf. (13) in \cite{CK}) 
on $\CC_1^{n+1}\times \CC_1^{n+1}$, $n\ge 2$, by 
\begin{equation}
 \langle(X_-,X_+),(Y_-,Y_+)\rangle:=
-\langle X_-,Y_-\rangle+\langle X_+,Y_+\rangle,
\label{sc-prd}
\end{equation}
where $(X_-,X_+),(Y_-,Y_+)\in \CC_1^{n+1}\times \CC_1^{n+1}$, 
whose signature is $(2n+2,2n+2)$. Let 
\begin{gather}
 M_V:=V_{1,1}(\CC_1^{n+1})=\{(u_-,u_+)\in \CC_1^{n+1}\times\CC_1^{n+1}|
\label{Stiefel}\\
\langle u_-,u_-\rangle=-1,\langle u_+,u_+\rangle=1,
\langle u_-,u_+\rangle=\langle u_-,iu_+\rangle=0\}
\notag
\end{gather}
be the \textit{indefinite complex Stiefel manifold of 
orthonormal time-like and space-like vectors in}
$\CC_1^{n+1}$.
\par
The group $U(1,n)$ acts on $M_V$ as 
$g\cdot(u_-,u_+):=(gu_-,gu_+)$ for $g\in U(1,n)$.
The action is transitive and the isotropy group 
at $(e_1,e_2)\in M_V$, $e_1=^t\!\!(1,0,\cdots,0)$ and 
$e_2=^t\!\!(0,1,0,\cdots,0)$ is $\{E_2\}\times U(n-1)$, 
where $E_2$ is the $2\times 2$ identity matrix. Hence,  
$M_V$ is represented by $U(1,n)/(\{E_2\}\times U(n-1))$ as a homogeneous space.
By \eqref{Stiefel}, the tangent space is
\begin{gather}
 T_{(u_-,u_+)}M_V=
(\{u_-,u_+\}^\perp\times \{u_-,u_+\}^\perp) 
\label{TV}\\
\oplus \RR(iu_-,iu_+)\oplus\RR(iu_-,-iu_+)\oplus\RR(u_+,u_-)
\oplus \RR(iu_+,-iu_-),
\notag
\end{gather}
where $\{u_-,u_+\}^\perp$ is the orthogonal complement of 
complex $2$-plane spanned by $u_-$ and $u_+$ in $\CC_1^{n+1}$, and 
$\RR(*,**)$ is the real line spanned by $(*,**)$ in $\CC_1^{n+1}$.
The signature of the scalar produce induced by \eqref{sc-prd} 
on $T_{(u_-,u_+)}M_V$ is $(2n,2n)$.
\par
The indefinite complex $2$-plane Grassmannian 
$$
M_G:=\GG_{1,1}(\CC_1^{n+1})
=\{\operatorname{span}_\CC\{u_-,u_+\}|\ (u_-,u_+)\in M_V\}
$$ 
is represented as a homogeneous space $U(1,n)/(U(1,1)\times U(n-1))$. 
Also, $M_G$ is obtained as the quotient space of $M_V$ by the action of $U(1,1)$:
\begin{equation}
U(1,1)\ni h,\quad (u_-,u_+)\mapsto (u_-,u_+)h,
\label{act11} 
\end{equation}
with projection $\pi^{VG}:M_V\rightarrow M_G$. Thus, the tangent space $T_{\pi^{VG}(u_-,u_+)}M_G$ is  
identified with the horizontal subspace 
\begin{equation}
T'_{(u_-,u_+)}:=\{u_-,u_+\}^\perp\times \{u_-,u_+\}^\perp 
\label{TG} 
\end{equation}
of $T_{(u_-,u_+)}M_V$ through the differential map of $\pi^{VG}$.
\par
The subspace $T'_{(u_-,u_+)}$ admits a complex structure $(X_-,X_+)\mapsto (iX_-,iX_+)$.
By transferring the complex structure $i$ and the inner product $\tilde{g}$ on 
$T'_{(u_-,u_+)}$ by $\tilde{\pi}_*$, we get the usual complex structure $J$ 
and the pseudo-K\"ahler metric $g$ on $\GG_{1,1}(\CC_1^{n+1})$, 
respectively. Note that $\GG_2(\CC^{n+1})$ is a complex $2(n-1)$-dimensional 
Hermitian symmetric space.  
\par 
Given $U$ an open subset of $M_G$, let $s:U\rightarrow s(U)\subset M_V$, $p\mapsto (u_-(p),u_+(p))$ 
be a local section with respect to $\pi^{VG}$. We define a local basis of a para-quaternionic K\"ahler structure 
on $M_G$ as follows:
\begin{gather}\label{paraK}
 I_1:(X_-,X_+)\mapsto (iX_-,-iX_+)
=(X_-,X_+)\begin{pmatrix} i & 0 \\ 0 & -i
	  \end{pmatrix},\notag \\
 I_2:(X_-,X_+)\mapsto (X_+,X_-)
=(X_-,X_+) \begin{pmatrix} 0 & 1 \\ 1 & 0
	   \end{pmatrix},
\label{para-QK-MG} \\
 I_3:(X_-,X_+)\mapsto (iX_+,-iX_-)
=(X_-,X_+)\begin{pmatrix} 0 & -i \\ i & 0
 \end{pmatrix},
\notag
\\
(X_-,X_+)\in T'_{(u_-(p),u_+(p))}
\cong T_{\pi^{VG}(u_-(p),u_+(p))}M_G, \notag
\end{gather}
through the identification  \eqref{TG}.
Then  for $I=aI_1+bI_2+cI_3$ with $a,b,c\in\RR$,  it holds
\begin{gather*}
 I\in (\mathbf{S}_+)_{\pi^{VG}(u_-(p),u_+(p))}\ \Leftrightarrow \ 
-a^2+b^2+c^2=1,\\
 I\in (\mathbf{S}_-)_{\pi^{VG}(u_-(p),u_+(p))}\ \Leftrightarrow \ 
-a^2+b^2+c^2=-1,\ a>0\\
 I\in (\mathbf{S}_0)_{\pi^{VG}(u_-(p),u_+(p))}\ \Leftrightarrow \ 
-a^2+b^2+c^2=0,\ (a,b,c)\not=(0,0,0),
\end{gather*}
where $\mathbf{S}_+,\mathbf{S}_-$ and $\mathbf{S}_0$ are given by 
\eqref{S+}, \eqref{S-} and \eqref{S0}, respectively.
\par
The manifolds 
\begin{equation}
 M_Z^s:=\{I\in (\mathbf{S}_s)_{\pi^{VG}(u_-,u_+)}|\ (u_-,u_+)\in M_V\}
\quad
(s=-,+,0)
\label{def-twst}
\end{equation}
are the \textit{twistor spaces of $M_G$ with respect to the para-quaternionic 
K\"ahler structure}. We define an action of $U(1,n)$ on $M_Z^s$ as follows:
\begin{gather}
\text{For}\ \  g\in U(1,n), \ \ 
(X_-,X_+)\in T'_{g(u_-,u_+)}\ \ 
\text{and}\ \  
I\in (\mathbf{S}_s)_{\pi^{VG}(u_-,u_+)},
\notag\\
(g I)(X_-,X_+):=g(I(g^{-1}X_-,g^{-1}X_+)).
\label{action}
\end{gather}
This action is transitive. Since $I$ is a linear endomorphism of a tangent space of 
$M_G$, each isotropy group $H$ of the action of $U(1,n)$ 
on $M_Z^s$ is contained in $U(1,1)\times U(n-1)$.
Also, at $(e_1,e_2)\in M_V$, each element of $\{E_2\}\times U(n-1)$ 
fixes $I$ for  all $(X_-,X_+)\in T'_{(e_1,e_2)}$ in \eqref{action}, 
so $\{E_2\}\times U(n-1)$ lies in $H$. On the other hand, any matrix  in $U(1,1)$ is 
written as $(e^{i\theta}E_2) g$, $(\theta\in\RR,\ g\in SU(1,1))$, so that 
$\{e^{i\theta}E_2\}\times \{E_{n-1}\}$ lies in $H$.
Thus, we need to determine those elements $g\in SU(1,1)$ such that 
$\{g\}\times \{E_{n-1}\}$ lies in the isotropy group $H$.
\par
Given $(u_-,u_+)\in M_V$, we put
\begin{gather}
g:=\begin{pmatrix} \alpha & \bar{\beta} \\ \beta & \bar{\alpha}
	   \end{pmatrix}\in SU(1,1),\quad
|\alpha|^2-|\beta|^2=1,
\notag
\\
 (\tilde{u}_-,\tilde{u}_+)
:=(u_-,u_+)g
=(\alpha u_-+\beta u_+,\bar{\beta}u_-+\bar{\alpha}u_+).
\end{gather}
This induces the map
$$
(\pi^{VG})^{-1}(\pi^{VG}(u_-,u_+))\rightarrow 
(\pi^{VG})^{-1}(\pi^{VG}(u_-,u_+)),\quad
(u_-,u_+)\mapsto (\tilde{u}_-,\tilde{u}_+),
$$
whose differential is given by
$$
T'_{(u_-,u_+)}\ni(X_-,X_+)
\mapsto
(\tilde{X}_-,\tilde{X}_+):=(X_-,X_+)g
\in T'_{(\tilde{u}_-,\tilde{u}_+)}.
$$
If we consider another local basis of the para-quaternionic K\"ahler structure 
on $M_G$, 
\begin{gather}
 \tilde{I}_1:(\tilde{X}_-,\tilde{X}_+)\mapsto (i\tilde{X}_-,-i\tilde{X}_+)
=(\tilde{X}_-,\tilde{X}_+)\begin{pmatrix} i & 0 \\ 0 & -i
	  \end{pmatrix},\notag \\
 \tilde{I}_2:(\tilde{X}_-,\tilde{X}_+)\mapsto (\tilde{X}_+,\tilde{X}_-)
=(\tilde{X}_-,\tilde{X}_+) \begin{pmatrix} 0 & 1 \\ 1 & 0
	   \end{pmatrix},
\label{para-QK-MG2} \\
 I_3:(\tilde{X}_-,\tilde{X}_+)\mapsto (i\tilde{X}_+,-i\tilde{X}_-)
=(\tilde{X}_-,\tilde{X}_+)\begin{pmatrix} 0 & -i \\ i & 0
 \end{pmatrix},
\notag
\\
(\tilde{X}_-,\tilde{X}_+)\in \{u_-(p),u_+(p)\}^\perp\times \{u_-(p),u_+(p)\}^\perp 
\cong T_{\pi^{VG}(u_-(p),u_+(p))}M_G,
\notag
\end{gather}
then  we have
\begin{align*}
 \tilde{I}_1
&=gI_1g^{-1}
=
(|\alpha|^2+|\beta|^2)I_1
+2\operatorname{Re}(\bar{\alpha}\beta)I_2
+2\operatorname{Im}(\bar{\alpha}\beta)I_3, \\
 \tilde{I}_2
&=gI_2g^{-1}
=
-2\operatorname{Im}(\alpha\beta)I_1
+2\operatorname{Re}(\alpha^2-\bar{\beta}^2)I_2
+2\operatorname{Im}(\alpha^2-\bar{\beta}^2)I_3,
\\
 \tilde{I}_3
&=gI_3g^{-1}
=
2\operatorname{Re}(\alpha\beta)I_1
+2\operatorname{Re}(\bar{\alpha}^2+\beta^2)I_2
+2\operatorname{Im}(\bar{\alpha}^2+\beta^2)I_3.
\end{align*}
Therefore,  $\tilde{I}_1=I_1$ $\Leftrightarrow$ 
$|\alpha|^2+|\beta|^2=1$, $\bar{\alpha}\beta=0$.
Then $|\alpha|^2-|\beta|^2=1$ implies 
$|\alpha|^2=1$, $\beta=0$ and 
$$
\tilde{I}_1=I_1 \Leftrightarrow 
\begin{pmatrix}
 \alpha & \bar{\beta} \\ \beta & \bar{\alpha}
\end{pmatrix} 
=
\begin{pmatrix}
 e^{it} & 0 \\ 0 & e^{-it}
\end{pmatrix}
\quad (t\in\RR).
$$
Also, $\tilde{I}_2=I_2$ $\Leftrightarrow$ 
$\operatorname{Im}(\alpha\beta)=0$, $\alpha^2-\bar{\beta}^2=1$.
Then $|\alpha|^2-|\beta|^2=1$ implies 
$\alpha,\beta\in\RR$ and
$$
\tilde{I}_2=I_2 \Leftrightarrow 
\begin{pmatrix}
 \alpha & \bar{\beta} \\ \beta & \bar{\alpha}
\end{pmatrix} 
=
\pm
\begin{pmatrix}
 \cosh t & \sinh t \\ \sinh t & \cosh t
\end{pmatrix}
\quad (t\in\RR).
$$
Finally, $\tilde{I}_1+\tilde{I}_2=I_1+I_2$ $\Leftrightarrow$ 
$|\alpha|^2+|\beta|^2-2\operatorname{Im}(\alpha\beta)=1$, 
$2i\bar{\alpha}\beta+\bar{\alpha}^2-\beta^2=1$.
The second equation is 
$(\bar{\alpha}+i\beta)^2=1$ and so $\bar{\alpha}+i\beta=\pm 1$.
Also $|\alpha|^2-|\beta|^2=1$ implies 
$|\alpha|^2=1+\operatorname{Im}(\alpha\beta)$ and 
$|\beta|^2=\operatorname{Im}(\alpha\beta)$.
Therefore, $\operatorname{Re}\alpha=\pm 1$ and 
$\operatorname{Im}\beta=0$. If we put $\beta=t\in\RR$, then  $\alpha=it\pm 1$. 
We summarize all these computations in the following result.
\begin{proposition}
Each twistor space
\begin{gather*}
 M_Z^s:=\{I\in (\mathbf{S}_s)_{\pi^{VG}(u_-,u_+)}|\ (u_-,u_+)\in M_V\} 
\quad (s=-,+,0)
\end{gather*}
of the indefinite complex $2$-plane Grassmannian $M_G$ with 
respect to the para-quaternionic K\"ahler structure $Q$ 
is represented as a homogeneous space $U(n+1)/(H_s\times U(n-1))$, $n\ge 2$,
where respectively 
\begin{gather*}
H_+=\left\{e^{i\theta}
\begin{pmatrix}
 \cosh t & \sinh t \\ \sinh t & \cosh t
\end{pmatrix}
\biggl|\biggr.\ \theta,t\in\RR\right\}, \\
H_{-}=T^2=S^1\times S^1, \quad  
H_0=\left\{e^{i\theta}
\begin{pmatrix}
 1+it & t \\ t & 1-it
\end{pmatrix}
\biggl|\biggr.\ \theta,t\in\RR\right\}.
\end{gather*}
\end{proposition}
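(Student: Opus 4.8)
The plan is to realize each twistor space $M_Z^s$ as a single orbit of the $U(1,n)$-action \eqref{action} and to compute its isotropy subgroup at a conveniently chosen base point. First I would work over $p_0:=\pi^{VG}(e_1,e_2)\in M_G$, using the frame $(e_1,e_2)\in M_V$ and the associated local basis $\{I_1,I_2,I_3\}$ of the para-quaternionic K\"ahler structure from \eqref{para-QK-MG}. From the structure relations one reads off $I_1^2=-1$, $I_2^2=1$ and, using $I_1I_2=-I_3$ and $I_2I_1=I_3$, also $(I_1+I_2)^2=I_1^2+I_1I_2+I_2I_1+I_2^2=0$ with coefficient vector $(1,1,0)\neq(0,0,0)$; hence $I_1\in(\mathbf{S}_-)_{p_0}$ (its $I_1$-coefficient being positive), $I_2\in(\mathbf{S}_+)_{p_0}$, and $I_1+I_2\in(\mathbf{S}_0)_{p_0}$. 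I would take these as base points for $s=-$, $s=+$, $s=0$ respectively. Since the action \eqref{action} is transitive, the statement reduces to showing that the isotropy subgroup at each of these points is exactly $H_s\times U(n-1)$.

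Next I would cut the isotropy computation down to the structure group. An element of $U(1,n)$ moves a twistor point by transporting an endomorphism of a tangent space of $M_G$, so any isotropy element must in particular fix $p_0$ as a point of $M_G$; hence the isotropy subgroup is contained in $U(1,1)\times U(n-1)$. Moreover the whole factor $\{E_2\}\times U(n-1)$ is contained in it, since by \eqref{action} each such element fixes $I$ pointwise. So only the $U(1,1)$-factor remains, and it acts by the frame reparametrization \eqref{act11}. Writing a general element of $U(1,1)$ as $(e^{i\theta}E_2)\,g$ with $g=\left(\begin{smallmatrix}\alpha&\bar\beta\\\beta&\bar\alpha\end{smallmatrix}\right)\in SU(1,1)$, $|\alpha|^2-|\beta|^2=1$, I would note that the central factor $e^{i\theta}E_2$ conjugates every $I_j$ to itself and hence fixes every twistor point; thus the remaining condition is $\tilde I=I$ for $\tilde I$ as in \eqref{para-QK-MG2}, a condition on $g$ alone.

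At this point the explicit expansions $\tilde I_1=gI_1g^{-1}$, $\tilde I_2=gI_2g^{-1}$, $\tilde I_3=gI_3g^{-1}$ in the basis $\{I_1,I_2,I_3\}$ --- the three displays immediately preceding the statement --- do the rest. Imposing $\tilde I_1=I_1$ forces $|\alpha|^2+|\beta|^2=1$ and $\bar\alpha\beta=0$, hence $\beta=0$, $|\alpha|=1$, i.e.\ $g=\operatorname{diag}(e^{it},e^{-it})$. Imposing $\tilde I_2=I_2$ forces $\operatorname{Im}(\alpha\beta)=0$ and $\alpha^2-\bar\beta^2=1$, hence $\alpha,\beta\in\RR$ with $\alpha^2-\beta^2=1$, i.e.\ $g=\pm\left(\begin{smallmatrix}\cosh t&\sinh t\\\sinh t&\cosh t\end{smallmatrix}\right)$. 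Imposing $\tilde I_1+\tilde I_2=I_1+I_2$ gives one real equation from the $I_1$-component and, from the $I_2$- and $I_3$-components, the complex equation $(\bar\alpha+i\beta)^2=1$; together with $|\alpha|^2-|\beta|^2=1$ these force $\operatorname{Re}\alpha=\pm1$, $\operatorname{Im}\beta=0$, so $g=\pm\left(\begin{smallmatrix}1+it&t\\t&1-it\end{smallmatrix}\right)$. Finally I would reattach the central circle $\{e^{i\theta}E_2\}$: the $\pm$ signs are absorbed into $e^{i\theta}$, and in the case $s=-$ the set $\{e^{i\theta}\operatorname{diag}(e^{it},e^{-it})\}$ collapses to the full diagonal torus $S^1\times S^1$. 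This produces $H_-=T^2$, $H_+$ and $H_0$ exactly as stated, and hence $M_Z^s=U(1,n)/(H_s\times U(n-1))$.

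The step that will take the most care is the $\mathbf{S}_0$ (light-cone / nilpotent) case: checking via $I_1I_2=-I_3$, $I_2I_1=I_3$ that $I_1+I_2$ genuinely lands in $(\mathbf{S}_0)_{p_0}$, correctly distilling the clean equation $(\bar\alpha+i\beta)^2=1$ from the $I_2$- and $I_3$-components of the conjugation formulas, and ensuring that only $\tilde I=I$ --- not $\tilde I=-I$ --- is admissible; it is precisely the conditions $(a,b,c)\neq(0,0,0)$ for $\mathbf{S}_0$ and $a>0$ for $\mathbf{S}_-$ that discard the spurious discrete solution. The $\mathbf{S}_+$ and $\mathbf{S}_-$ cases are routine by comparison, and the passage from the computed isotropy subgroups to the homogeneous-space presentation is purely formal.
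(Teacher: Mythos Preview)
Your proposal is correct and follows essentially the same approach as the paper: pick the base points $I_1\in(\mathbf{S}_-)_{p_0}$, $I_2\in(\mathbf{S}_+)_{p_0}$, $I_1+I_2\in(\mathbf{S}_0)_{p_0}$, reduce the isotropy computation to the $U(1,1)$-factor via \eqref{action} and \eqref{act11}, peel off the central circle, and solve $\tilde I_j=I_j$ using the conjugation formulas displayed just before the proposition. One small imprecision: the $\pm$ signs in the $s=+$ and $s=0$ cases arise already from solving $\tilde I=I$ (e.g.\ $(\bar\alpha+i\beta)^2=1$), not from any need to rule out $\tilde I=-I$, so your remark invoking the constraints $a>0$ and $(a,b,c)\neq(0,0,0)$ to ``discard the spurious discrete solution'' is misplaced --- the isotropy condition is $g\cdot I=I$ outright, and there is nothing further to exclude.
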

\par
We introduce the following action of $S^1$ on $M_V$ as 
\begin{equation}
S^1=\RR/2\pi\ZZ\ni\theta,\quad 
 (u_-,u_+)\mapsto(e^{i\theta}u_-,e^{i\theta}u_+). 
\label{act1}
\end{equation}
Let $M_S$ be the quotient space of the action \eqref{act1} 
with  projection $\pi^{VS}:M_V\rightarrow M_S$.
The tangent space $T_{\pi^{VS}(u_-,u_+)}M_S$ is 
identified with the subspace 
\begin{equation}
(\{u_-,u_+\}^\perp\times \{u_-,u_+\}^\perp )
\oplus\RR(iu_-,-iu_+)\oplus\RR(u_+,u_-)
\oplus \RR(iu_+,-iu_-)
\label{TS} 
\end{equation}
of $T_{(u_-,u_+)}M_V$ through the differential map of $\pi^{VS}$, 
and $\operatorname{ker}\big(d\pi^{VS}\big)=\RR(iu_-,iu_+).$
\par
We consider $3$ actions of $S^1$, $SO_+(1,1)$ and $\RR$ on 
$M_V$, which commute with the action \eqref{act1} of $S^1$ as:
\begin{gather}
 S^1=\RR/2\pi\ZZ\ni t,\quad (u_-,u_+)\mapsto
 (e^{it}u_-,e^{-it}u_+),
\label{act+}\\
SO(1,1)\ni \begin{pmatrix} \cosh t & \sinh t \\
	  \sinh t & \cosh t
	 \end{pmatrix}, \quad
(u_-,u_+)\mapsto
(u_-,u_+)\begin{pmatrix} \cosh t & \sinh t \\
	  \sinh t & \cosh t
	 \end{pmatrix},
\label{act-}\\
\RR\ni t,\quad 
(u_-,u_+)\mapsto (u_-,u_+)
\begin{pmatrix}
 1+it & t \\ t & 1-it
\end{pmatrix}.
\label{act0}
\end{gather}
Then $S^1$, $SO_+(1,1)$ and $\RR$ also act on $M_S$ 
by \eqref{act+}, \eqref{act-} and \eqref{act0}, respectively.
We denote by $M^+_Z$, $M^-_Z$ and $M^0_Z$, the quotient 
space of $M_S$ by the actions  
\eqref{act+}, \eqref{act-} and \eqref{act0}, 
with projections $\pi_+^{SZ}:M_S\rightarrow M^+_Z$, 
$\pi_-^{SZ}:M_S\rightarrow M^-_Z$ and 
$\pi_0^{SZ}:M_S\rightarrow M^0_Z$, respectively.
We also have the projections $\pi_+^{Z}:M^+_Z\rightarrow M_G$, 
$\pi_-^{Z}:M^0_Z\rightarrow M_G$ and 
$\pi_0^{Z}:M^-Z\rightarrow M_G$, respectively, such that
the following diagram of fibrations is commutative:
\begin{equation}
\xymatrix{
 & & M^+_Z \ar[rd]_{\pi_+^Z} & 
\\
M_V \ar[r]_{\pi^{VS}} \ar@/^60pt/@{>}[rrr]^{\pi^{VG}} 
& M_S \ar[ru]^{\pi_+^{SZ}} 
\ar[r]_{\pi_0^{SZ}} \ar[rd]_{\pi_-^{SZ}}  & M^0_Z \ar[r]_{\pi_0^Z} & M_G. 
\\
 & & M^-_Z \ar[ru]_{\pi_-^Z} & 
}
\label{com-diag}
\end{equation} 
\par
Let $(u_-,u_+)\in M_V$ be a point and let $\pi^{VG}(u_-,u_+)\in M_G$ be 
an indefinite $2$-plane in $\CC_1^{n+1}$ spanned by $u_-$ and $u_+$.
Recall \eqref{para-QK-MG}.
\par
The vertical part of $d\pi_s^{SZ}$ is spanned by
$$
\begin{cases}
 (iu_-,-iu_+) & (s=+), \\
 (u_+,u_-) & (s=-), \\
 (iu_-+u_+,u_--iu_+) & (s=0),
\end{cases}
$$
and the
tangent spaces of $M^+_Z$, $M^-_Z$ and $M^0_Z$ are 
respectively described as follows:  
\begin{gather}
 T_{\pi^+_Z\circ\pi^{VS}(u_-,u_+)}M^+_Z
\cong 
(\{u_-,u_+\}^\perp\times \{u_-,u_+\}^\perp )
\oplus\RR(u_+,u_-)
\oplus \RR(iu_+,-iu_-),
\notag\\
 T_{\pi^-_Z\circ\pi^{VS}(u_-,u_+)}M^-_Z
\cong 
(\{u_-,u_+\}^\perp\times \{u_-,u_+\}^\perp )
\oplus\RR(iu_-,-iu_+)
\oplus \RR(iu_+,-iu_-),
\label{TZ}\\
 T_{\pi^0_Z\circ\pi^{VS}(u_-,u_+)}M^0_Z
\cong 
(\{u_-,u_+\}^\perp\times \{u_-,u_+\}^\perp )
\oplus\RR(iu_--u_+,-u_--iu_+)
\oplus \RR(iu_+,-iu_-).
\notag
\end{gather}
The vertical part of $d\pi_s^{Z}$ is spanned by
$$
\begin{cases}
 (u_+,u_-),\ (iu_+,-iu_-) & (s=+), \\
 (iu_-,-iu_+),\ (iu_+,-iu_-) & (s=-), \\
 (iu_--u_+,-u_--iu_+),\ (iu_+,-iu_-) & (s=0),
\end{cases}
$$
and the horizontal subspace of 
$ T_{\pi^s_Z\circ\pi^{VS}(u_-,u_+)}M^s_Z$ ($s=+,-,0$) is 
identified with $T'_{(u_-,u_+)}$ of \eqref{TG}. 
\par 
Our next target is to study when there are local lifts of an  immersion 
$\varphi_s:\Sigma\rightarrow M^s_Z$ to $\tilde{\varphi}_s:U\rightarrow M_V$. 
Let $\Sigma$ be a manifold and let 
$\varphi_s:\Sigma\rightarrow M^s_Z$ be an immersion.
Assume that for some point $p\in\Sigma$, there exists an open neighbourhood 
$U$ of $p$ in $\Sigma$, and a local lift 
$\tilde{\varphi}_s:U\rightarrow M_V$ 
$\tilde{\varphi}_s(p)=(u_-(p),u_+(p))$, of $\varphi_s$ 
satisfying $(\pi^{SZ}_s\circ\pi^{VS})\circ\tilde{\varphi}_s=\varphi_s$.
As $\CC_1^{n+1}$-valued functions on $U$, the differential of $u_-$ and 
$u_+$ are given by
\begin{equation}
 du_-=i\alpha_-u_-+\beta u_++w_-, \qquad   
 du_+=\overline{\beta}u_-+i\alpha_+u_++w_+,
\label{diff-u-+}
\end{equation}
where $\alpha_-$ and $\alpha_+$ (resp. $\beta$) are $\RR$-valued 
(resp. $\CC$-valued), and $w_-$, $w_+$ are 
$\{u_-,u_+\}^\perp$-valued $1$-forms on $U$. By \eqref{TV}, \eqref{TG} and \eqref{TZ}, $\varphi_s$ is 
\textit{horizontal} with respect to the \textit{twistor fibration} 
$\pi_s^Z:M_Z^s\rightarrow M_G$ if and only if
\begin{equation}
 \begin{cases}
  \beta =0 & (s=+), \\
 \alpha_-=\alpha_+,\ \operatorname{Im}\beta=0 & (s=-), \\
 \alpha_--\alpha_+=2\operatorname{Re}\beta,\ \operatorname{Im}\beta=0 
& (s=0).
 \end{cases}
\label{hor-tw}
\end{equation}
\par
Suppose $\varphi_s:\Sigma\rightarrow M^s_Z$ a horizontal 
immersion with respect to the twistor fibration 
$\pi_s^{SZ}:M_Z^s\rightarrow M_G$
and a local lift 
$\tilde{\varphi}_s=(u_-,u_+):U\rightarrow M_V$ of $\varphi_s$ on 
$U\subset\Sigma$ satisfying \eqref{hor-tw}. Then  \eqref{diff-u-+} is written as:
 \begin{gather*}
  du_-=
\begin{cases}
 i\alpha_-u_-+w_- & (s=+), \\ 
 i\alpha u_-+\beta u_++w_- & (s=-), \\
 i(\alpha+\beta)u_-+\beta u_++w_- & (s=0), 
\end{cases}
\\
 du_+=
 \begin{cases}
 i\alpha_+u_++w_+ & (s=+), \\
 \beta u_-+i\alpha u_++w_+ & (s=-), \\  
 \beta u_-+i(\alpha-\beta)u_++w_+ & (s=0),
 \end{cases}
 \end{gather*}
where 
$\alpha$ and $\beta$ are $\RR$-valued 
$1$-forms on $U$.

Let $(\tilde{u}_-,\tilde{u}_+)$ be an another local lift of $\varphi_s$ 
on $U$. By the definition of $M_Z^s$ 
(cf. \eqref{act1}, \eqref{act+}, \eqref{act-} and \eqref{act0}),
there are functions $\theta$ and $t$ on $U$ such that 
\begin{equation}
 (\tilde{u}_-,\tilde{u}_+)=
\begin{cases}
 e^{i\theta}(e^{it}u_-,e^{-it}u_+) & (s=+),\\
 e^{i\theta}(u_-,u_+)\begin{pmatrix} \cosh t & \sinh t \\
	  \sinh t & \cosh t
	 \end{pmatrix} & (s=-),\\
 e^{i\theta}(u_-,u_+)
\begin{pmatrix}
 1+it & t \\ t & 1-it
\end{pmatrix} & (s=0).
\end{cases}
\end{equation}
By taking differentiation and using \eqref{hor-tw}, we obtain:
\begin{gather*}
 d\tilde{u}_-=
 \begin{cases}
 e^{i(\theta+t)}(i(d\theta+dt+\alpha_-)u_-+w_-) 
& (s=-),\\
 e^{i\theta}(i(d\theta+\alpha)(\cosh t u_-+\sinh t u_+)
+(dt+\beta)(\sinh tu_-+\cosh tu_+) \\
\hspace{6.6cm} +\cosh tw_-+\sinh tw_+) & (s=+),\\
 e^{i\theta}(((-t+i)(d\theta+\alpha)+i(dt+\beta))u_-
+(it(d\theta+\alpha)+dt+\beta)u_+
\\
\hspace{7cm} +(1+it)w_-+tw_+
) & (s=0),
 \end{cases}
\\
 d\tilde{u}_+=
 \begin{cases}
 e^{i(\theta-t)}(i(d\theta-dt+\alpha_+)u_++w_+) 
& (s=-),\\
 e^{i\theta}(i(d\theta+\alpha)(\sinh t u_-+\cosh t u_+)
+(dt+\beta)(\cosh tu_-+\sinh tu_+) \\
\hspace{6.6cm} +\sinh tw_-+\cosh tw_+) & (s=+),\\
 e^{i\theta}((it(d\theta+\alpha)+dt+\beta)u_-
+((t+i)(d\theta+\alpha)-i(dt+\beta))u_+
\\
\hspace{7cm} +tw_-+(1-it)w_+
) & (s=0).
 \end{cases}
\end{gather*}
Given $\RR$-valued $1$-forms $\alpha_-$, $\alpha_+$, $\alpha$ and 
$\beta$ on $U$, on a small enough open subset $U'\subset U$, take solutions $(\theta,t)$
to the system of equations 
\begin{equation*}
 d\theta=
\begin{cases}
\displaystyle -\frac{1}{2}(\alpha_-+\alpha_+) & (s=+),\\
 -\alpha & (s=-,0),
\end{cases}
,\quad
 dt=
\begin{cases}
\displaystyle \frac{1}{2}(\alpha_+-\alpha_-) & (s=+),\\
 -\beta & (s=-,0).
\end{cases}
\end{equation*}
Then  $d\tilde{u}_-$ and $d\tilde{u}_+$ take value in 
$\{u_-, u_+\}^\perp$, respectively.
 Hence, by changing $(\tilde{u}_-,\tilde{u}_+)$ to $(u_-,u_+)$ and 
$U'$ to $U$, etc., we get: 
\begin{proposition}
 Let $\Sigma$ be a manifold and let $\varphi_s:\Sigma\rightarrow M_Z^s$ 
($s=+,-,0$) be an immersion which is horizontal with respect to the 
twistor fibration $\pi^{Z}_s:M_Z^s \rightarrow M_G$.
 Then  for each point $p\in\Sigma$, there exist an open subset 
$U\ni p$ and a local lift 
$\tilde{\varphi_s}=(u_-,u_+):U\rightarrow M_V$ of $\varphi_s$ satisfying
 \begin{equation}
  du_-=w_-,\quad du_+=w_+,
 \label{du-+}
 \end{equation}
where $w_-$ and $w_+$ are $\{u_-,u_+\}^\perp$-valued 
$1$-forms on $U\subset\Sigma$.
\end{proposition}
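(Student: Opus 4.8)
The plan is a standard moving-frame reduction. The maps $\pi^{VS}\colon M_V\to M_S$ and $\pi_s^{SZ}\colon M_S\to M_Z^s$ are fibre bundles (quotients by free actions of $S^1$ and of a one-parameter group), so their composite $\pi_s^{SZ}\circ\pi^{VS}\colon M_V\to M_Z^s$ is a fibre bundle with $2$-dimensional fibre; picking a local section over a trivialising neighbourhood of $\varphi_s(p)$ and precomposing it with $\varphi_s$ yields, on a suitable open set $U\ni p$, a local lift $\tilde\varphi_s=(u_-,u_+)\colon U\to M_V$ of $\varphi_s$. For such a lift the differentials $du_\pm$ are given by \eqref{diff-u-+}, with $\RR$-valued $1$-forms $\alpha_-,\alpha_+$, a $\CC$-valued $1$-form $\beta$, and $\{u_-,u_+\}^\perp$-valued $1$-forms $w_\pm$; since $\varphi_s$ is horizontal with respect to $\pi_s^Z$, these forms satisfy \eqref{hor-tw}, so $du_\pm$ take the reduced form displayed just before the statement, involving only the real forms $\alpha,\beta$ (together with $\alpha_-,\alpha_+$ when $s=+$) and $w_\pm$.

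Next I would exhaust the residual gauge freedom. Any other lift of $\varphi_s$ over $U$ is $(\tilde u_-,\tilde u_+)=g_{\theta,t}\cdot(u_-,u_+)$ for a pair of functions $(\theta,t)$ on $U$, where $g_{\theta,t}$ runs over the $2$-parameter fibre group, namely the product of the $S^1$-action \eqref{act1} with \eqref{act+}, \eqref{act-}, or \eqref{act0} according to $s$. Differentiating and collecting the components of $d\tilde u_\pm$ along $\tilde u_-$ and $\tilde u_+$ — which span the same $\CC$-plane as $u_-,u_+$, so that $\{\tilde u_-,\tilde u_+\}^\perp=\{u_-,u_+\}^\perp$ — one sees, exactly as in the computation preceding the statement, that $d\tilde u_\pm$ are $\{\tilde u_-,\tilde u_+\}^\perp$-valued, which is precisely \eqref{du-+} with $w_\pm:=d\tilde u_\pm$, if and only if $(\theta,t)$ solves the first-order system $d\theta=\omega_1$, $dt=\omega_2$, where $\omega_1=-\tfrac12(\alpha_-+\alpha_+)$, $\omega_2=\tfrac12(\alpha_+-\alpha_-)$ for $s=+$ and $\omega_1=-\alpha$, $\omega_2=-\beta$ for $s=-,0$. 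So the statement reduces to the local solvability of this Pfaffian system.

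The one genuinely nontrivial point is that $\omega_1$ and $\omega_2$ are closed, for then the Poincar\'e lemma supplies, after shrinking $U$, functions $(\theta,t)$ with $d\theta=\omega_1$, $dt=\omega_2$. I would verify this by differentiating the reduced form of \eqref{diff-u-+}: the integrability identities $d(du_-)=0=d(du_+)$, together with the relations obtained by differentiating the defining constraints of $M_V$ (the constancy of $\langle u_\pm,u_\pm\rangle$, $\langle u_-,u_+\rangle$ and $\langle u_-,iu_+\rangle$, which pins down the $u_-$- and $u_+$-components of $dw_\pm$), yield structure equations expressing $d\alpha_-,d\alpha_+,d\alpha,d\beta$ in terms of $w_\pm$; the horizontality constraints \eqref{hor-tw} then ensure that the relevant combinations — $\alpha_-\pm\alpha_+$ for $s=+$, and $\alpha$ and $\beta$ for $s=-,0$ — are closed. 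Having solved the system one renames $(\tilde u_-,\tilde u_+)$ as $(u_-,u_+)$ and the smaller set as $U$, and \eqref{du-+} holds. I expect the main obstacle to be the bookkeeping in this last step: keeping straight which part of each wedge product lands in $\RR u_-$, in $\RR u_+$, or in $\{u_-,u_+\}^\perp$; everything upstream of it is formal.
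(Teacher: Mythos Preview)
Your approach is identical to the paper's: start with an arbitrary local lift, act by the two-parameter fibre group $H_s$, and reduce the problem to solving the Pfaffian system $d\theta=\omega_1$, $dt=\omega_2$ with the specific $\omega_1,\omega_2$ you write down. The paper simply asserts ``take solutions $(\theta,t)$'' on a smaller open set without further justification; you go one step further and correctly flag that local solvability requires $d\omega_1=d\omega_2=0$, proposing to extract this from $d(du_\pm)=0$ together with the horizontality constraints \eqref{hor-tw}.

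That is exactly the right place to look, but the claim does not survive the computation. Take $s=+$, so $\beta=0$ and $du_-=i\alpha_-u_-+w_-$. From $0=d^2u_-$ one gets $i(d\alpha_-)u_- - i\alpha_-\wedge w_- + dw_-=0$; pairing with $u_-$ via $((\,\cdot\,,u_-))$ and using that $((w_-,u_-))=0$ determines $((dw_-,u_-))$ in terms of $((w_-,du_-))$, one finds
\[
d\alpha_-(X,Y)=2\,\mathrm{Im}\,((w_-(X),w_-(Y))),\qquad
d\alpha_+(X,Y)=-2\,\mathrm{Im}\,((w_+(X),w_+(Y))).
\]
Nothing in the horizontality condition $\beta=0$ forces these $2$-forms to vanish, so $\alpha_\pm$ --- and hence your $\omega_1=-\tfrac12(\alpha_-+\alpha_+)$, $\omega_2=\tfrac12(\alpha_+-\alpha_-)$ --- are not closed in general once $\dim\Sigma\ge 2$. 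The cases $s=-,0$ behave the same way: one obtains, for instance, $d\alpha=\mathrm{Im}\,((w_-,w_-))-\mathrm{Im}\,((w_+,w_+))$, again with no reason to vanish. So the ``bookkeeping'' you anticipate is not the obstacle; the identity you are hoping for is simply not there. This is a genuine gap --- shared by the paper's own argument, which glosses over precisely this solvability step --- and your sketch does not close it.
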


Given $(u_-,u_+)\in M_V$, we consider $3$ families of 
curves $\gamma^s_r$ ($s\in\{+,-,0\}$, $r\in\RR$) 
in $H_1^{2n+1}$:
\begin{align}
 \gamma^+_r(t) :&=
\begin{pmatrix} u_- & u_+ \end{pmatrix}
\begin{pmatrix} e^{it} & 0 \\ 0 & e^{-it} \end{pmatrix}
\begin{pmatrix} \cosh r \\ \sinh r \end{pmatrix}
=e^{it}\cosh ru_-+e^{-it}\sinh ru_+, \quad (t\in S^1) 
\label{gamma+}
\\
 \gamma^-_r(t) :&=
\begin{pmatrix} u_- & u_+ \end{pmatrix}
\begin{pmatrix} \cosh t & \sinh t \\ \sinh t & \cosh t \end{pmatrix}
\begin{pmatrix} \cosh r \\ i\sinh r \end{pmatrix}
\quad (t\in\RR)
\label{gamma-} \\
&=(\cosh r\cosh t+i\sinh r\sinh t)u_-+(\cosh r\sinh t+i\sinh r\cosh t)u_+,
\notag\\
 \gamma^0_r(t) :&=
\begin{pmatrix} u_- & u_+ \end{pmatrix}
\begin{pmatrix} 1+it & t \\ t & 1-it \end{pmatrix}
\begin{pmatrix} \cosh r \\ i\sinh r \end{pmatrix}
\quad (t\in\RR)
\label{gamma0} \\
&=(\cosh r+ite^r)u_-+(te^r+i\sinh r)u_+.
\notag
\end{align}
The image of the curve $\gamma^s_r$ lies in 
$H_1^{2n+1}\cap\operatorname{span}_\CC\{u_-,u_+\}=H_1^3$, so that 
the image of $\pi_{-}\circ\gamma^s_r$ is contained in a 
totally geodesic complex hyperbolic line $\pi_{-}(H_1^3)=\mathbb{CH}^1\subset \CHn$.
\par
The tangent vector field of the curve $\gamma_r^s$ ($s=+,-,0$) is 
\begin{equation}
  \frac{d}{dt}\gamma^s_r(t)=
  \begin{cases}
   i(e^{it}\cosh ru_--e^{-it}\sinh ru_+) & (s=+), \\
(\cosh r\sinh t+i\sinh r\cosh t)u_- \\
+(\cosh r\cosh t+i\sinh r\sinh t)u_+ 
& (s=-), \\
 e^r(iu_-+u_+) & (s=0).
  \end{cases}
\label{ddt-gam}
\end{equation}
Since
\begin{equation}
 \left\langle   \frac{d}{dt}\gamma^s_r(t), i\gamma^s_r(t)\right\rangle=
  \begin{cases}
   -\cosh 2r & (s=+), \\
   -\sinh 2r & (s=-), \\
   -e^{2r} & (s=0),
  \end{cases}
\label{ddtgm-igm}
\end{equation}
the horizontal part of \eqref{ddtgm-igm} with respect to 
the Hopf fibration $\pi_{-}:H^{2n+1}\rightarrow\CHn$ is:
\begin{align}
 \mathcal{H}  \frac{d}{dt}\gamma^s_r(t) &=  
\frac{d}{dt}\gamma^s_r(t)+
 \left\langle   \frac{d}{dt}\gamma^s_r(t), i\gamma^s_r(t)\right\rangle
 i\gamma^s_r(t)
\notag\\
&=\begin{cases}
-i\sinh 2r(e^{it}\sinh ru_-+e^{-it}\cosh ru_+) & (s=+),
\\
  \cosh 2r((\cosh r\sinh t-i\sinh r\cosh t)u_-
\\
+(\cosh r\cosh t-i\sinh r\sinh t)u_+) & (s=-),
\\
e^{2r}\left((te^r-i\sinh r)u_-+(\cosh r-ite^r)u_+\right) & (s=0).
 \end{cases}
\label{hor-ddtgm}
\end{align}
So a unit horizontal vector field $T_r^s$ along $\gamma^s_r$ is
\begin{align}
 T^s_r &=
\begin{cases}
\displaystyle \frac{1}{\sinh 2r} \mathcal{H}  \frac{d}{dt}\gamma^s_r(t) 
=  \frac{1}{\sinh 2r} \left(\frac{d}{dt}\gamma^s_r(t)
-i\cosh 2r \gamma^s_r(t)\right) & (s=+), \\
\displaystyle \frac{1}{\cosh 2r} \mathcal{H}  \frac{d}{dt}\gamma^s_r(t) 
=  \frac{1}{\cosh 2r} \left(\frac{d}{dt}\gamma^s_r(t)
-i\sinh 2r \gamma^s_r(t)\right) & (s=-), \\
\displaystyle \frac{1}{e^{2r}} \mathcal{H}  \frac{d}{dt}\gamma^s_r(t) 
=  \frac{1}{e^{2r}} \left(\frac{d}{dt}\gamma^s_r(t)
-ie^{2r} \gamma^s_r(t)\right) & (s=0), 
\end{cases}
\notag\\
&=\begin{cases}
-i(e^{it}\sinh ru_-+e^{-it}\cosh ru_+) & (s=+),
\\
  (\cosh r\sinh t-i\sinh r\cosh t)u_-
\\
+(\cosh r\cosh t-i\sinh r\sinh t)u_+ & (s=-),
\\
(te^r-i\sinh r)u_-+(\cosh r-ite^r)u_+ & (s=0).
 \end{cases}
\label{Trs}
\end{align}
With respect to the flat connection $D$ on $\CC_1^{n+1}$, 
we have
\begin{gather}
 D_{T^s_r}T^s_r=
\begin{cases}
 \displaystyle \frac{-2}{\sinh 2r}(e^{it}\sinh^3 ru_-
+e^{-it}\cosh^3 ru_+) & (s=+),
\\
\\
 \displaystyle \frac{1}{\cosh 2r}\Bigl((\cosh r\cosh t-i\sinh r\sinh t)u_-
& \\
\hspace{1.1cm} +(\cosh r\sinh t-i\sinh r\cosh t)u_+ & \\
\hspace{0.6cm} -\sinh 2r\bigl((\sinh r\cosh t+i\cosh r\sinh t)u_- & \\
\hspace{1.1cm} +(\sinh r\sinh t+i\cosh r\cosh t)u_+\bigr)\Bigr) & (s=-),
\\
 (e^{-r}-\sinh r-ite^r)u_--(te^r+i(e^{-r}+\cosh r))u_+ & (s=0).
\end{cases}
\label{DTT}
\end{gather}
Hence,  with respect to the induced Levi-Civita connection 
$\tilde{\nabla}$ on $H_1^{2n+1}$, we obtain:
\begin{gather}
 \tilde{\nabla}_{T^s_r}T^s_r
=D_{T^s_r}T^s_r+\langle D_{T^s_r}T^s_r,\gamma^s_r(t)\rangle
 \gamma^s_r(t) 
=\begin{cases}
 -2\coth 2r\, iT^+_r & (s=+), \\
 -2\tanh 2r\, iT^-_r & (s=+), \\  
 -2 iT^0_r & (s=0), \\  
 \end{cases}
\label{nTT}
\end{gather}
and the curvature $\kappa$ 
of $\pi_{-}\circ\gamma^s_r$ as a curve in $\CHn(-4)$ is 
$2\coth 2r$ ($s=+$) (resp. $2\tanh 2r$ ($s=-$) and $2$ ($s=0$)).
Also 
\begin{equation}
\cosh r'\gamma^s_r(t)+\sinh r'iT^s_r=\gamma^s_{r+r'}(t) 
\label{parallel}
\end{equation}
implies that $\{\pi_{-}\circ\gamma^s_r\}$ is a family of parallel 
curves of constant curvature $\kappa>2$ ($s=+$)
(resp. $\kappa<2$ ($s=-$) and $\kappa=2$ ($s=0$))
 in a totally geodesic complex hyperbolic line 
$\mathbb{CH}^1(-4)$ in $\CHn(-4)$.
Therefore, given $\pi_s^{SZ}\circ\pi^{VS}(u_-,u_+)\in M_Z^s$,
we have a family of parallel curves $\{\gamma_r^s(\RR)\}$ of 
constant curvature $2\coth 2r$ ($s=+$) (resp. $2\tanh 2r$ ($s=-$) and 
$2$ ($s=0$)) which lies in a totally geodesic complex hyperbolic line 
$\mathbb{CH}^1(-4)$ in $\CHn(-4)$.
\par
 Conversely, a curve $\gamma(\RR)$ of constant curvature $\kappa$, contained in a  
totally geodesic $\mathbb{CH}^1\subset \CHn$,  is obtained by this way, as $\gamma$ satisfies the ODE 
$\tilde{\nabla}_{\dot{\gamma}}\dot{\gamma}=\kappa\, i\,\dot{\gamma}$.  
\begin{theorem} \label{thm-cv-CHn}
 The twistor space $M^+_Z$ (resp. $M^-_Z$ and $M^0_Z$) is 
identified with the set of parallel family of curves $\{\pi_{-}\circ\gamma_r^+\}$ 
(resp. $\{\pi_{-}\circ\gamma_r^-\}$ and $\{\pi_{-}\circ\gamma_r^0\}$),  
each of which lies in a totally geodesic complex hyperbolic line $\mathbb{CH}^1\subset \CHn$,  
with constant curvature $\kappa>2$ (resp. $0\le \kappa<2$ and $\kappa=2$).
\end{theorem}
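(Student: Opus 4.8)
The plan is to realize the identification as an explicit $U(1,n)$-equivariant bijection $\Phi_s$ from $M_Z^s$ onto the set of parallel families described, and then to upgrade it to a diffeomorphism by equivariance. The map is already implicit in the computations preceding the theorem: writing a point of $M_Z^s$ as $\pi_s^{SZ}\circ\pi^{VS}(u_-,u_+)$ with $(u_-,u_+)\in M_V$, set
\[
\Phi_s\big(\pi_s^{SZ}\circ\pi^{VS}(u_-,u_+)\big)=\{\,\pi_-\circ\gamma_r^s\ :\ r\in\RR\,\},
\]
with $\gamma_r^s$ as in \eqref{gamma+}--\eqref{gamma0}. By the computations ending in \eqref{nTT} and \eqref{parallel}, each $\pi_-\circ\gamma_r^s$ lies in the totally geodesic complex hyperbolic line $\pi_-(H_1^3)$, where $H_1^3=H_1^{2n+1}\cap\operatorname{span}_\CC\{u_-,u_+\}$, has constant curvature $2\coth 2r$ (so $>2$) for $s=+$, $2\tanh 2r$ (of absolute value less than $2$) for $s=-$, and $2$ for $s=0$, and the members of the $r$-family are mutually parallel; thus $\Phi_s$ takes values in the asserted set. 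It is $U(1,n)$-equivariant since \eqref{gamma+}--\eqref{gamma0} are: $\gamma_r^s(\cdot,gu_-,gu_+)=g\,\gamma_r^s(\cdot,u_-,u_+)$ for $g\in U(1,n)$.

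First I would check $\Phi_s$ is well defined. Two lifts of one point of $M_Z^s$ differ by an element of the group generated by \eqref{act1} and, for $s=+,-,0$, by \eqref{act+}, \eqref{act-} or \eqref{act0}. Under \eqref{act1} each $\gamma_r^s$ acquires a unit-modulus scalar factor, which $\pi_-$ kills; under the other three the relevant $2\times 2$ matrices form one-parameter groups, so replacing $(u_-,u_+)$ by $(u_-,u_+)$ times a parameter-$t_0$ matrix only reparametrizes $\gamma_r^s$ to $\gamma_r^s(\cdot+t_0)$ (with, for $s=0$, an extra overall phase that $\pi_-$ again kills) and leaves $r$ fixed. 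Hence the image $\pi_-\circ\gamma_r^s(\RR)$, and so the whole parallel family, is unchanged, and $\Phi_s$ descends to $M_Z^s$. In passing one records that $\Phi_s$ produces the family together with its distinguished $r=0$ member --- the common centre $\pi_-(u_-)$ of the geodesic circles for $s=+$, the unique (curvature-zero) geodesic of the family for $s=-$, a marked horocycle for $s=0$ --- and that the fibre $(\mathbf{S}_s)_\ell$ over a fixed line $\ell$ carries exactly that data (a point, resp.\ an oriented geodesic of $\ell$, for $s=+,-$, the orientation coming from the sign condition in \eqref{S-}; an ideal point of $\ell$ together with a horocycle there for $s=0$).

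For surjectivity, take one curve $c$ of a given parallel family and let $\ell$ be the totally geodesic complex hyperbolic line containing it, $\ell=\pi_-(H_1^3)$ with $H_1^3=H_1^{2n+1}\cap W$ for a unique indefinite complex $2$-plane $W$. A unit-speed horizontal lift $\hat c$ of $c$ to $H_1^3$ solves the ODE recalled just before the theorem, $\tilde\nabla_{\dot{\hat c}}\dot{\hat c}=\pm\kappa\,i\,\dot{\hat c}$, where $\kappa$ is the curvature of $c$; since the normal of $H_1^{2n+1}$ in $\CC_1^{n+1}$ is its position vector and $|\dot{\hat c}|=1$, this reduces to the constant-coefficient linear equation $\ddot{\hat c}=\pm\kappa\,i\,\dot{\hat c}+\hat c$ in $W\cong\CC^2$, whose characteristic roots are purely imaginary when $|\kappa|>2$, complex with nonzero real part when $|\kappa|<2$, and a double imaginary root when $|\kappa|=2$. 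Hence, choosing an orthonormal basis $(u_-,u_+)$ of $W$ adapted to the initial data and allowing for the overall phase that $\pi_-$ ignores, $c=\pi_-\circ\hat c$ equals, up to reparametrization, $\pi_-\circ\gamma_r^s$, with $s$ forced by this trichotomy and $r$ by $\kappa=2\coth 2r$ resp.\ $2\tanh 2r$. By \eqref{parallel} the whole given family then coincides with $\{\pi_-\circ\gamma_\rho^s\}_{\rho\in\RR}=\Phi_s\big(\pi_s^{SZ}\circ\pi^{VS}(u_-,u_+)\big)$. For injectivity, if $(u_-,u_+)$ and $(v_-,v_+)$ have equal $\Phi_s$-image they span the same $W$ and so agree in $M_G$, and it remains to see that any $h\in U(1,1)$ with $(v_-,v_+)=(u_-,u_+)h$ preserving the family with its distinguished member lies in the group generated by \eqref{act1} and the relevant one of \eqref{act+}, \eqref{act-}, \eqref{act0}; equivalently, one computes the stabilizer in $U(1,1)$ of such a structure and finds it to be precisely that group --- the diagonal torus $\{\operatorname{diag}(e^{i\phi},e^{i\psi})\}$ (the isometries fixing the centre) for $s=+$, and, similarly, the subgroup fixing the oriented core geodesic resp.\ the marked horocycle and its ideal point for $s=-,0$. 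Granting this, $\Phi_s$ is a bijection, and a $U(1,n)$-equivariant bijection of homogeneous spaces is a diffeomorphism, which yields the identification.

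The step I expect to be the real work is injectivity, that is, getting the stabilizer computation exactly right. One must take "parallel family of curves" with precisely the right accompanying data --- the distinguished $r=0$ member, and, for $s=-$, its orientation --- since otherwise the stabilizer inside $\operatorname{Isom}(\mathbb{CH}^1)$ is too large, by the one-parameter group of hyperbolic isometries fixing an ideal point when $s=0$, or by the half-turns about points of the core geodesic when $s=-$, and $\Phi_s$ loses injectivity. This is exactly the data that the choices \eqref{S+}--\eqref{S0} and the construction of $M_Z^s$ build in, so once that is handled the matching of stabilizers with the groups generated by \eqref{act1}, \eqref{act+}, \eqref{act-}, \eqref{act0} is routine, and surjectivity is a short consequence of the characteristic-root trichotomy.
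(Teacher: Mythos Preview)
Your approach is essentially the paper's, made rigorous. In the paper there is no separate proof environment for Theorem~\ref{thm-cv-CHn}: the computations \eqref{ddt-gam}--\eqref{parallel} establish that each $\pi_s^{SZ}\circ\pi^{VS}(u_-,u_+)$ yields a parallel family with the stated curvature, and the converse is dispatched in one sentence (``Conversely, \ldots satisfies the ODE $\tilde\nabla_{\dot\gamma}\dot\gamma=\kappa\,i\,\dot\gamma$''), after which the theorem is simply stated. So the forward map and surjectivity are handled exactly as you do; what you add is the explicit check of well-definedness under \eqref{act1} and \eqref{act+}--\eqref{act0}, and the stabilizer discussion for injectivity. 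Both are genuine gaps in the paper's exposition that you fill correctly.

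Your caution about injectivity is well placed and is indeed the substantive point the paper leaves implicit. With $M_Z^s$ taken in its quotient description (the one used for the curves $\gamma_r^s$), the fibre over a fixed line $\ell$ is $U(1,1)$ modulo the group generated by \eqref{act1} and the relevant one of \eqref{act+}--\eqref{act0}; matching this against the isotropy of a parallel family forces exactly the extra data you name: the centre for $s=+$ (recovered from the degenerate $r=0$ member), the oriented core geodesic for $s=-$ (since $\operatorname{diag}(1,-1)\notin e^{i\theta}SO_+(1,1)$), and a marked horocycle at the common ideal point for $s=0$. One small correction: in the $s=0$ case your verification gives $\gamma_r^0(t)\mapsto\gamma_r^0(t+t_0)$ on the nose, with no extra phase to kill. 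Also, be aware that the paper uses two presentations of $M_Z^{\pm}$ --- via $(\mathbf S_\pm)$ in \eqref{def-twst} and via the quotients \eqref{act+}--\eqref{act-} --- and the $\pm$ labels effectively swap between them (the stabilizer $H_-=T^2$ in Proposition~4.1 is the group generated by \eqref{act1} and \eqref{act+}); your argument consistently uses the quotient description, which is the one driving the curves and the theorem, so this does not affect your proof, but the remark that the orientation for $s=-$ ``comes from the sign condition in \eqref{S-}'' points to the other presentation and should be rephrased in terms of $SO_+(1,1)$ versus $O(1,1)$.
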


\section{Construction of Hopf real hypersurfaces in $\CHn$}
\label{Hopf-CHn}
In this section, we show how to construct Hopf hypersurfaces $M_+^{2n-1}$ 
(resp. $M_-^{2n-1}$ and $M_0^{2n-1}$) in $\CHn$  with Hopf curvature $\mu$ such that $|\mu|>2$ (resp. $|\mu|<2$ and $|\mu|=2$) from \textit{horizontal submanifolds} $\Sigma_+^{2n-2}$ (resp. $\Sigma_-^{2n-2}$ and $\Sigma_0^{2n-2}$) in $M_Z^+$ 
(resp. $M_Z^-$ and $M_Z^0$) with respect to the twistor fibration $\pi^Z_+\to M_G$ 
(resp. $\pi^Z_-$ and $\pi^Z_0$), as either real line or circle bundles.
\par
Given an immersion $\varphi_s:\Sigma_Z^s\rightarrow M_Z^s$ ($s=+,-$ or $0$),  we consider the pull-back bundle over $\Sigma_Z^s$ with respect to the 
fibration $\pi^{SZ}_s:M_S\rightarrow M_Z^s$ (cf. \eqref{act+}, 
\eqref{act-} and \eqref{act0}), namely 
\begin{equation}
 \Sigma_S:=\varphi_s^* M_S:=
\{(p,\pi^{VS}(u_-,u_+))\in\Sigma_Z^s\times M_S|\ 
\varphi(p)=\pi_s^{SZ}(\pi^{VS}(u_-,u_+))\}.
\label{pl-bdle}
\end{equation}
We have the following commutative diagram:
\begin{equation} \label{com-diag2}
 \xymatrix{
 \Sigma_S \ar[r]^{\psi_s} \ar[d]_{\pi_s^\Sigma} & M_S \ar[d]^{\pi^{SZ}_s} \\
 \Sigma_Z^s \ar[r]^{\varphi_s} & M^s_Z
}
\end{equation}
where $\pi_s^\Sigma(p,\pi^{VS}(u_-,u_+))=p$ and 
$\psi_s(p,\pi^{VS}(u_-,u_+))=\pi^{VS}(u_-,u_+)$. 
Next, take $\Sigma_V$ the pull-back bundle over 
$\Sigma_S$ with respect to the fibration 
$\pi^{VS}:M_V\rightarrow M_S$, that is, $\tilde{\pi}_s^{\Sigma}:\Sigma_V\to \Sigma_s$ is the  projection and $\tilde{\psi}_s:\Sigma_V\to M_V$ is the bundle map, with the commutative diagram 
\begin{equation} \label{com-diag3}
 \xymatrix{
 \Sigma_V \ar[r]^{\tilde{\psi}_s} 
\ar[d]_{\tilde{\pi}_s^\Sigma} & M_V \ar[d]^{\pi^{VS}} \\
 \Sigma_S \ar[r]^{\psi_s}  & M_S 
}.
\end{equation}
\par
According to \eqref{gamma+}, \eqref{gamma-} and \eqref{gamma0},  
we define new maps $\tilde{p}^s_r:M_V\rightarrow H_1^{2n+1}$ and 
$p^s_r:M_S\rightarrow\CHn$ by
\begin{gather}
 \tilde{p}^s_r(u_-,u_+)=
\begin{cases}
 \cosh r\,u_-+\sinh r\,u_+ & (s=+), \\
 \cosh r\,u_-+i\sinh r\,u_+ & (s=-,0), 
\end{cases}
\notag\\
p^s_r(\pi^{VS}(u_-,u_+))=
\pi_{-}(\tilde{p}^s_r(u_-,u_+))
\label{psr}
\end{gather}
and 
\begin{equation}
 \Phi^s_r:\Sigma_S\rightarrow\CHn,\quad
 \Phi^s_r=p^s_r\circ\psi_s,\quad
 \wtPhi^s_r:\Sigma_V\rightarrow\CHn,\quad
 \wtPhi^s_r=\tilde{p}^s_r\circ\tilde{\psi}_s.
\label{Phisr}
\end{equation}
Next, for each $p\in \Sigma_Z^s$, the fiber $(\pi^\Sigma_s)^{-1}(p)$ is mapped 
by $\Phi_r^s$ to the curve $\gamma_r^s(\RR)$ of constant curvature lying in a totally geodesic 
complex hyperbolic line $\pi_s^Z(\varphi(p))=\mathbb{CH}^1$ in $\CHn$, 
corresponding to $\varphi(p)$ as in Theorem  \ref{thm-cv-CHn}. 
We have the following commutative diagram:
\begin{equation}
\label{com-diag4}
 \xymatrix{
 \Sigma_V \ar[r]^{\tilde{\psi}_s} 
\ar[d]_{\tilde{\pi}_s^\Sigma} & M_V \ar[d]^{\pi^{VS}} \ar[r]^{\tilde{p}^s_r} & 
H_1^{2n+1} \ar[d]^{\pi_{-}} \\
 \Sigma_S \ar[r]^{\psi_s} \ar[d]_{\pi_s^\Sigma} 
& M_S \ar[d]^{\pi^{SZ}_s} \ar[r]^{p^s_r} & \CHn \\
 \Sigma_Z^s \ar[r]^{\varphi_s} & M^s_Z\ar[r]^{\pi^Z_s}  & M_G
}, \qquad s=+,-,0.
\end{equation}
\par
Consider an immersion $\varphi_s:\Sigma^{2n-2}=\Sigma_Z^s\rightarrow M_Z^s$ ($s=+,-$ or $0$) 
from a real $(2n-2)$-dimensional manifold, which is \textit{horizontal} with respect to the \textit{twistor fibration} 
$\pi_s^Z:M_Z^s\rightarrow M_G$. Using a local trivialization, we locally describe the map 
$\wtPhi^s_r:\Sigma_V\rightarrow H_1^{2n+1}$ as follows. Let 
$\widetilde{\Phi}^s_r:S^1\times\RR\times U\rightarrow H_1^{2n+1}$ be a map 
defined by (cf. \eqref{gamma+}, \eqref{gamma-} and \eqref{gamma0})
\begin{equation}
 \wtPhi^s_r(\theta,t,p)=
  \begin{cases}
   e^{i\theta}(e^{it}\cosh ru_-(p)+e^{-it}\sinh ru_+(p)) & (s=+), \\
e^{i\theta}((\cosh r\cosh t+i\sinh r\sinh t)u_-(p) \\
\ \ \ +(\cosh r\sinh t+i\sinh r\cosh t)u_+(p)) 
& (s=-), \\
e^{i\theta}((\cosh r+ite^r)u_-(p)+(te^r+i\sinh r)u_+(p)) & (s=0).
  \end{cases}
\label{wtPhi}
\end{equation}
Then  we have $\pi_{-}(\wtPhi_r^s(S^1\times\RR\times U))=
\Phi_r^s|_{(\pi_s^\Sigma)^{-1}(U)}((\pi_s^\Sigma)^{-1}(U))$. The derivatives of $\wtPhi_r^s$ with respect to $\theta$ and $t$ are (cf. \eqref{ddt-gam}):
\begin{align*}
 d\wtPhi_r^s(\partial/\partial \theta)&=i\wtPhi_r^s,
\\
 d\wtPhi_r^s(\partial/\partial t)&=
  \begin{cases}
   ie^{i\theta}(e^{it}\cosh ru_-(p)-e^{-it}\sinh ru_+(p)) & (s=+), \\
e^{i\theta}((\cosh r\sinh t+i\sinh r\cosh t)u_-(p) \\
\ \ \ +(\cosh r\cosh t+i\sinh r\sinh t)u_+(p)) 
& (s=-), \\
e^{i\theta}e^r(iu_-(p)+u_+(p)) & (s=0).
  \end{cases}
\end{align*}
According to \eqref{ddtgm-igm}, we get 
\begin{equation*}
 \langle d\wtPhi_r^s(\partial/\partial t), i\wtPhi_r^s\rangle=
  \begin{cases}
   -\cosh 2r & (s=+), \\
   -\sinh 2r & (s=-), \\
   -e^{2r} & (s=0).
  \end{cases}
\end{equation*}
By \eqref{hor-ddtgm}, the horizontal part of $d\wtPhi_r^s(\partial/\partial t)$ with respect to 
the Hopf fibration $\pi_{-}:H^{2n+1}_1\rightarrow\CHn$ is 
\begin{align}
 \mathcal{H} d\wtPhi_r^s(\partial/\partial t) &=  
d\wtPhi_r^s(\partial/\partial t) +
 \langle d\wtPhi_r^s(\partial/\partial t), i\wtPhi_r^s \rangle
 i\wtPhi_r^s
\notag\\
&=\begin{cases}
-ie^{i\theta}\sinh 2r(e^{it}\sinh ru_-+e^{-it}\cosh ru_+) & (s=+),
\\
  e^{i\theta}\cosh 2r((\cosh r\sinh t-i\sinh r\cosh t)u_-
\\
\hspace{1.5cm} +(\cosh r\cosh t-i\sinh r\sinh t)u_+) & (s=-),
\\
e^{2r}e^{i\theta}\left((te^r-i\sinh r)u_-+(\cosh r-ite^r)u_+\right) & (s=0).
 \end{cases}
\label{Hddt}
\end{align}
Also for $X\in T_p\Sigma$, using \eqref{du-+}, we have
\begin{equation}
 d\wtPhi^s_r(X)=
  \begin{cases}
   e^{i\theta}(e^{it}\cosh rw_-(X)+e^{-it}\sinh rw_+(X)) & (s=+), \\
e^{i\theta}((\cosh r\cosh t+i\sinh r\sinh t)w_-(X) \\
\ \ \ +(\cosh r\sinh t+i\sinh r\cosh t)w_+(X)) 
& (s=-), \\
e^{i\theta}((\cosh r+ite^r)w_-(X)+(te^r+i\sinh r)w_+(X)) & (s=0),
  \end{cases}
\label{dwtPhi-X} 
\end{equation}
where $w_-$ and $w_+$ are $\{u_-,u_+\}^\perp$-valued $1$-forms 
on $\Sigma$.
\begin{proposition}\label{horizontal}
 Let $\Sigma^{2n-2}$, $n\ge 2$, be a $(2n-2)$-dimensional manifold and 
let $\varphi_s:\Sigma\rightarrow M^s_Z$ $(s=+,-,0)$ be an immersion 
which is horizontal with respect to the twistor fibration 
$\pi_Z^s:M_Z^s\rightarrow M_G$. Then  the map $\Phi_r^s:\Sigma_S\rightarrow\CHn$ 
($r\in\RR-\{0\}$ when $s=+$, and  $r\in\RR$ when $s=-,0$)
defined by \eqref{Phisr} is an immersion if, and only if,  
for each $t\in\RR$, $p\in\Sigma$, $X\in T_p\Sigma$, \eqref{dwtPhi-X} 
does not vanish.
\end{proposition}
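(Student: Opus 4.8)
The plan is to analyze the differential of $\wtPhi_r^s$ on the total space $\Sigma_V$ (equivalently, in the local trivialization, on $S^1\times\RR\times U$) by splitting the tangent space into the three pieces: the $\partial/\partial\theta$ direction, the $\partial/\partial t$ direction, and the directions coming from $T_p\Sigma$. The key observation is that $\Phi_r^s$ is the composition $\pi_{-}\circ\wtPhi_r^s$ modulo the fiber of $\tilde\pi_s^\Sigma$, so $\Phi_r^s$ is an immersion at a point if and only if, after quotienting out $\ker d\pi_{-}$ and the vertical directions of $\Sigma_V\to\Sigma_S$, the remaining differential is injective. Concretely, $d\wtPhi_r^s(\partial/\partial\theta)=i\wtPhi_r^s$ spans $\ker d\pi_{-}$ and is killed when we pass to $\CHn$; so the real content is the behaviour of $d\wtPhi_r^s$ on $\partial/\partial t$ together with the horizontal $T_p\Sigma$-directions, projected horizontally with respect to the Hopf fibration.

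The main steps I would carry out are: (i) recall that $\Phi_r^s=p_r^s\circ\psi_s$ and that $\pi_{-}$ restricted to a horizontal subspace $T'_\ww$ is a linear isomorphism onto $T_{\pi_{-}(\ww)}\CHn$; hence $d\Phi_r^s$ is injective iff the horizontal parts (with respect to $\pi_{-}$) of the images under $d\wtPhi_r^s$ of a basis of $T\Sigma_S$ are linearly independent. (ii) Using the computations already displayed, $\mathcal{H}\, d\wtPhi_r^s(\partial/\partial t)$ equals the nonzero multiple of $T_r^s$ recorded in \eqref{Hddt}, so the $\partial/\partial t$-direction always contributes a nonzero horizontal vector, proportional to $e^{i\theta}T_r^s$; moreover this vector lies in $\operatorname{span}_\CC\{u_-,u_+\}$ while, by \eqref{dwtPhi-X}, $d\wtPhi_r^s(X)$ for $X\in T_p\Sigma$ lies in $\{u_-,u_+\}^\perp$. (iii) Conclude that the $\partial/\partial t$-direction is automatically transverse (in the horizontal bundle) to the image of $T_p\Sigma$ and to $\ker d\pi_{-}$, since $\{u_-,u_+\}^\perp$ and $\operatorname{span}_\CC\{u_-,u_+\}$ are orthogonal in $\CC_1^{n+1}$ and both are preserved by multiplication by $i$. (iv) Therefore injectivity of $d\Phi_r^s$ reduces exactly to injectivity of $X\mapsto d\wtPhi_r^s(X)$ on $T_p\Sigma$ (note these vectors already lie in $\{u_-,u_+\}^\perp$, hence are horizontal for $\pi_{-}$, so no further projection is needed), i.e.\ to the stated condition that \eqref{dwtPhi-X} is nonzero for every nonzero $X\in T_p\Sigma$ and every $t$.

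For the converse direction, I would note that if \eqref{dwtPhi-X} vanishes for some $t_0$, $p$ and some $X\neq 0$, then at the corresponding point of $\Sigma_S$ the differential $d\Phi_r^s$ has a nontrivial kernel (the horizontal lift of $X$ maps to zero after projecting by $\pi_{-}$), so $\Phi_r^s$ fails to be an immersion there; and conversely, since the only ways $d\Phi_r^s$ can degenerate are through a $T_p\Sigma$-direction (the $\theta$- and $t$-directions being already controlled in (ii)–(iii)), nonvanishing of \eqref{dwtPhi-X} for all $X\neq 0$ and all $t$ is exactly what is needed. One has to be slightly careful that a general tangent vector to $\Sigma_S$ is a mixture of a $\partial/\partial t$-part and a $T_p\Sigma$-part, but because these land in the mutually orthogonal subspaces $\operatorname{span}_\CC\{u_-,u_+\}$ and $\{u_-,u_+\}^\perp$ of $\CC_1^{n+1}$, their horizontal parts can never cancel, so the block-diagonal analysis is valid.

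The step I expect to be the main (if modest) obstacle is step (iii): making rigorous that after passing to the quotient $\Sigma_S=\Sigma_V/S^1$ and composing with $\pi_{-}$, the $\theta$-fiber direction and the $\pi_{-}$-fiber direction are both genuinely collapsed and do not interfere with the $t$- and $\Sigma$-directions — in other words, bookkeeping the two successive $S^1$-quotients in diagram \eqref{com-diag4} and checking that the horizontal distributions chosen for $\pi^{VS}$, $\pi_s^{SZ}$ and $\pi_{-}$ are compatible, so that the computation of $\mathcal H\, d\wtPhi_r^s$ done upstairs on $M_V$ really descends to the statement about $d\Phi_r^s$ on $\Sigma_S$. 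Once the orthogonal-decomposition observation $\operatorname{span}_\CC\{u_-,u_+\}\perp\{u_-,u_+\}^\perp$ is in hand, the rest is linear algebra.
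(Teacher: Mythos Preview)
Your proposal is correct and matches the paper's approach, which in fact gives no explicit proof at all: the proposition is stated immediately after the computations of $d\wtPhi_r^s(\partial/\partial\theta)=i\wtPhi_r^s$, $\mathcal{H}\,d\wtPhi_r^s(\partial/\partial t)$ in \eqref{Hddt}, and $d\wtPhi_r^s(X)$ in \eqref{dwtPhi-X}, and the reader is left to observe exactly the orthogonal splitting $\operatorname{span}_\CC\{u_-,u_+\}\oplus\{u_-,u_+\}^\perp$ that you spell out. Your write-up simply makes explicit the linear-algebra bookkeeping the paper leaves to the reader, including the reason the constraint $r\neq 0$ is needed when $s=+$ (the $\sinh 2r$ factor in \eqref{Hddt}).
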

Suppose \eqref{dwtPhi-X} does not vanish for all $X\in T_p\Sigma$ 
at $p\in\Sigma$. Then
\begin{equation}
 N=d\pi_{-}(N')
\label{un}
\end{equation}
is a unit normal vector on a real hypersurface $\Phi^s_r(\Sigma_S)$ in $\CHn$, where 
\begin{equation}
 N'=
\begin{cases}
e^{i\theta}(e^{it}\sinh ru_-+e^{-it}\cosh ru_+) & (s=+),
\\
  ie^{i\theta}((\cosh r\sinh t-i\sinh r\cosh t)u_-
\\
\hspace{1.5cm} +(\cosh r\cosh t-i\sinh r\sinh t)u_+) & (s=-),
\\
ie^{i\theta}\left((te^r-i\sinh r)u_-+(\cosh r-ite^r)u_+\right) & (s=0).
 \end{cases}
\label{un'}
\end{equation}
Then
\begin{gather}
 \xi':=-iN'=
\begin{cases}
\displaystyle 
\frac{1}{\sinh 2r} \left(
d\wtPhi_r^s(\partial/\partial t) -\cosh 2r\,  i\wtPhi_r^s \right) & (s=+), \\
\\
\displaystyle 
\frac{1}{\cosh 2r} \left(
d\wtPhi_r^s(\partial/\partial t) -\sinh 2r \, i\wtPhi_r^s \right) & (s=-), \\
\\
\displaystyle 
 \frac{1}{e^{2r}} \left( d\wtPhi_r^s(\partial/\partial t) -e^{2r}  i\wtPhi_r^s \right) & (s=0), 
\end{cases}
\label{xi'}
\end{gather}
is a horizontal lift of the structure vector $\xi$ of 
$\Phi^s_r(\Sigma_S)$ in $\CHn$ (cf. \eqref{Trs}).
\par
Quite the same calculations as \eqref{DTT} and \eqref{nTT} yield 
\begin{equation}
 \tilde{\nabla}_{\xi'}\xi'=\mu N',\quad
\mu=\begin{cases}
 -2\coth 2r  & (s=+), \\
 -2\tanh 2r  & (s=+), \\  
 -2  & (s=0),
    \end{cases}
\label{nabla-xi'}
\end{equation}
and $\bar{\nabla}_\xi \xi=\mu N$, where $\bar{\nabla}$ is 
the Levi-Civita connection of $\CHn$.
Hence, $\nabla_\xi \xi=\phi A\xi=0$ and 
$\mu=\langle A\xi,\xi\rangle$ hold, so that $M$ is a 
\textit{Hopf hypersurface} in $\CHn(-4)$ with Hopf curvature $\mu$.
In addition, we see that  
$\cosh r'\wtPhi_r^s+\sinh r'N'=\wtPhi_{r+r'}^s$ (cf. \eqref{parallel}). That is, $r\mapsto \pi_{-}(\wtPhi_r^s(M^s_\Sigma))$ is a family of parallel Hopf real hypersurfaces, which may have singularities (focal points).
\begin{theorem} \label{th-construction} Let $\Sigma^{2n-2}$, $n\ge 2$, be a $(2n-2)$-dimensional manifold and 
let $\varphi_s:\Sigma\rightarrow M^s_Z$ $(s=+,-,0)$ be an immersion 
which is horizontal with respect to the twistor fibration 
$\pi_Z^s:M_Z^s\rightarrow M_G$. Assume that the map $\Phi_r^s:\Sigma_S\rightarrow\CHn$ 
$(r\in\RR-\{0\}$ when $s=+$, and  $r\in\RR$ when $s=-,0)$ defined by \eqref{Phisr} is an immersion.
\begin{enumerate}
\item The map $\Phi_r^s$ defines a Hopf real hypersurface in $\CHn$ such that $\vert\mu\vert>2$ for $s=+$, $\vert\mu\vert<2$ for $s=-$ and $\vert \mu\vert=2$ for $s=0$, respectively.
\item The family $r\mapsto \Phi_r^s(M_Z^s)$ is a family of parallel Hopf real hypersurfaces in $\CHn$, possibly with some focal points. 
\end{enumerate} 
\end{theorem}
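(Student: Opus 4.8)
The plan is to assemble the computations already carried out just above the statement into a clean argument, proving (i) and (ii) in turn. For (i), I would fix a horizontal immersion $\varphi_s\colon\Sigma\to M_Z^s$ and, around an arbitrary point, choose by the proposition that produces a local lift with $du_-=w_-$, $du_+=w_+$ taking values in $\{u_-,u_+\}^\perp$ a lift $\tilde\varphi_s=(u_-,u_+)\colon U\to M_V$. In the induced local trivialization the map $\wtPhi_r^s$ is given by \eqref{wtPhi} on parameters $(\theta,t,p)\in S^1\times\RR\times U$, and since the $\theta$-circle is precisely the fibre of the Hopf fibration $\pi_-$, the composite $\pi_-\circ\wtPhi_r^s$ descends to $\Phi_r^s$ on the $(2n-1)$-dimensional manifold $\Sigma_S$, locally coordinatized by $(t,p)$. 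Granting the standing hypothesis (Proposition~\ref{horizontal}) that $\Phi_r^s$ is an immersion, its image is a real hypersurface of $\CHn$. Using \eqref{dwtPhi-X}, \eqref{Hddt} and the defining relations \eqref{Stiefel} of $M_V$, I would then check that the vector $N'$ of \eqref{un'} satisfies $\langle N', i\wtPhi_r^s\rangle=\langle N', d\wtPhi_r^s(\partial/\partial t)\rangle=\langle N', d\wtPhi_r^s(X)\rangle=0$ for all $X\in T_p\Sigma$, that $N'$ is $\pi_-$-horizontal and of unit length with $((N',\wtPhi_r^s))=0$, $((\wtPhi_r^s,\wtPhi_r^s))=-1$; hence $N=d\pi_-(N')$ as in \eqref{un} is a unit normal along $\Phi_r^s(\Sigma_S)$.

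Since $J'$ on $\CHn$ is induced by multiplication by $i$ on horizontal lifts, $\xi'=-iN'$ is the horizontal lift of the structure vector $\xi=-J'N$, and \eqref{xi'} exhibits $\xi'$ as a fixed combination of $d\wtPhi_r^s(\partial/\partial t)$ and $i\wtPhi_r^s$, hence tangent to $\Sigma_S$. For each fixed $p$, the fibre with the map $\wtPhi_r^s$ is $e^{i\theta}$ times the model curve $\gamma_r^s$ of Section~\ref{Twist}, and along it $\xi'$ agrees with $e^{i\theta}T_r^s$; since multiplication by $e^{i\theta}$ is an isometry of $\CC_1^{n+1}$ preserving $H_1^{2n+1}$, and $du_\pm$ has no $\{u_-,u_+\}$-component by horizontality \eqref{hor-tw}, the computation of $\tilde\nabla_{\xi'}\xi'$ reduces verbatim to \eqref{DTT}--\eqref{nTT} and yields $\tilde\nabla_{\xi'}\xi'=\mu N'$ with $\mu=-2\coth 2r$, $-2\tanh 2r$, $-2$ for $s=+,-,0$ respectively, as recorded in \eqref{nabla-xi'}. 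Projecting by $\pi_-$ gives $\bar\nabla_\xi\xi=\mu N$ for the Levi-Civita connection $\bar\nabla$ of $\CHn$; comparing with $\bar\nabla_\xi\xi=\nabla_\xi\xi+\langle A\xi,\xi\rangle N=\phi A\xi+\langle A\xi,\xi\rangle N$ forces $\phi A\xi=0$, and since $\ker\phi=\RR\xi$ this gives $A\xi=\mu\xi$ with $\mu=\langle A\xi,\xi\rangle$. Thus $\Phi_r^s(\Sigma_S)$ is Hopf, and one reads off $|\mu|>2$ for $s=+$ (from $|\coth 2r|>1$, which is also why $r\neq0$ is required), $|\mu|<2$ for $s=-$ (from $|\tanh 2r|<1$), and $|\mu|=2$ for $s=0$.

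For (ii), I would establish the identity $\cosh r'\,\wtPhi_r^s+\sinh r'\,N'=\wtPhi_{r+r'}^s$, the counterpart of \eqref{parallel}, by direct substitution in \eqref{wtPhi} and \eqref{un'} exactly as in Section~\ref{Twist}. Because $((\wtPhi_r^s,\wtPhi_r^s))=-1$, $((N',N'))=1$, $((\wtPhi_r^s,N'))=0$ and $((N',i\wtPhi_r^s))=0$, the curve $r'\mapsto\cosh r'\,\wtPhi_r^s+\sinh r'\,N'$ is a unit-speed geodesic of $H_1^{2n+1}$ which is $\pi_-$-horizontal, so its image under $\pi_-$ is a geodesic of $\CHn$ issuing orthogonally from $\Phi_r^s(\Sigma_S)$ in the direction $N$. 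Hence $\Phi_{r+r'}^s$ is the image of $\Phi_r^s$ under the normal exponential map at distance $r'$, so $\{\Phi_r^s(\Sigma_S)\}_r$ is a parallel (equidistant) family, with focal points occurring exactly at those parameters where $\Phi_{r'}^s$ fails to be an immersion.

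The step I expect to demand the most care is the reduction of $\tilde\nabla_{\xi'}\xi'$ to the model-curve computation: one must confirm that horizontality of $\varphi_s$, in the precise form \eqref{hor-tw} with its consequence $du_\pm=w_\pm$, is exactly what annihilates the ``base'' contributions, so that $\tilde\nabla_{\xi'}\xi'$ acquires no tangential component and the Hopf curvature $\mu$ comes out constant and independent of the point of $\Sigma$. One must also check that $\Phi_r^s$ and $N$ are genuinely well defined on $\Sigma_S$ rather than merely on $\Sigma_V$, i.e. that they are insensitive to the residual $S^1$-freedom \eqref{act1} in the local lift; this is immediate from the $e^{i\theta}$-equivariance visible in \eqref{wtPhi} and \eqref{un'}. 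Everything else is bookkeeping of formulas already in place before the statement.
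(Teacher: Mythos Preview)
Your proposal is correct and follows essentially the same approach as the paper: both arguments use the local lift from Proposition~4.2 to write $\wtPhi_r^s$ explicitly, verify that $N'$ in \eqref{un'} is a horizontal unit normal, reduce the computation of $\tilde\nabla_{\xi'}\xi'$ to the model-curve identities \eqref{DTT}--\eqref{nTT} to obtain \eqref{nabla-xi'}, conclude $\phi A\xi=0$ hence $A\xi=\mu\xi$, and for part~(ii) invoke the identity $\cosh r'\,\wtPhi_r^s+\sinh r'\,N'=\wtPhi_{r+r'}^s$. Your write-up is in fact slightly more explicit than the paper's on the well-definedness of $\Phi_r^s$ and $N$ on $\Sigma_S$ and on the geometric interpretation of the parallel family via the normal exponential map, but the underlying argument is the same.
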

\begin{remark} For $s=+$, according to Theorem \ref{th-Montiel}, for some $r\neq 0$, the set of focal points is a complex submanifold.
\end{remark}

\section{Classical Examples} \label{Examples} 
In this section, we use all our previous computations to obtain the classical examples of Hopf hypersurfaces in  $\CHn$, $n\ge 2$. 
\begin{example}\normalfont Given $n\ge 2$, take a natural number $0\le k\le n-1$, and recall $H_1^{2k+1}\subset \CC_1^{k+1}$ and 
$S^{2n-2k-1}\subset \CC^{n-k}$, so that we define the embedding 
\begin{gather*}
 \tilde{\psi}_+:\Sigma_V=H_1^{2k+1}\times S^{2n-2k-1}
\rightarrow M_V=V_{1,1}(\CC_1^{n+1}),\\
\left(  \tilde{u}_-, \tilde{u}_+\right) \mapsto
\left(
\begin{pmatrix}
 \tilde{u}_- \\ 0
\end{pmatrix},
\begin{pmatrix}
 0  \\ \tilde{u}_+
\end{pmatrix}
\right). 
\end{gather*}
Note that when $k=0$, $H_1^1\subset\mathbb{C}_1^1$ is nothing but the standard round $1$-sphere, but whose metric has the opposite sign to the usual one. Apart from that, everything works as in higher dimensions. 

We put $\Sigma_S:=\pi^{VS}(\Sigma_V)$ and $\Sigma_Z^+:=\pi^{SZ}_+(\Sigma_S)$, respectively.
By the definition of $M^+_Z$, $\Sigma_Z^+=\mathbb{CH}^k\times\mathbb{CP}^{n-k-1}$, 
and $\Sigma_S$ is the total space of the $S^1$-bundle over $\Sigma_Z^+$. 
We recall the maps $\Phi_r^+:\Sigma_S\rightarrow\CHn$ and 
$\wtPhi_r^+:\Sigma_V\rightarrow H_1^{2n+1}$ defined by \eqref{Phisr}, respectively.
Then  locally, $\wtPhi_r^+$ is described by \eqref{wtPhi} as:
\begin{equation*}
\wtPhi_r^+:T^2\times U\rightarrow H_1^{2n+1},\quad
 \wtPhi_r^+(\theta,t,p)=e^{i\theta}
 \begin{pmatrix}
  e^{it}\cosh r\,\tilde{u}_-(p) \\  e^{-it}\sinh r\,\tilde{u}_+(p) 
 \end{pmatrix},
\end{equation*}
where $U$ is an open subset of $\mathbb{CH}^k\times\mathbb{CP}^{n-k-1}$ 
and $p\mapsto (\tilde{u}_-(p),\tilde{u}_+(p))$ is a local cross section 
of the $T^2$-bundle $H_1^{2k+1}\times 
S^{2n-2k-1}\rightarrow\mathbb{CH}^k\times\mathbb{CP}^{n-k-1}$.
For a tangent vector 
$(X_-,X_+)\in T_p (\mathbb{CH}^k\times\mathbb{CP}^{n-k-1})$,
we compute 
\begin{equation*}
 d\wtPhi_r^+(0,0,(X_-,X_+))
=e^{i\theta}
\begin{pmatrix}
 e^{it}\sinh r X_- \\ e^{-it} \cosh r X_+
\end{pmatrix}.
\end{equation*}
Hence for $r\not=0$, 
$\wtPhi_r^+:T^2\times U\rightarrow H_1^{2n+1}$ and 
$\Phi_r^+:\Sigma_S\rightarrow\CHn$ are immersions.
By \eqref{Hddt}, \eqref{un'} and \eqref{xi'}, a unit normal vector 
of $\wtPhi_r^+$ is 
\begin{equation*}
 N'=i\xi'=
\frac{i}{\sinh 2r}
\mathcal{H} d\wtPhi_r^+(\partial/\partial t)
=e^{i\theta}
\begin{pmatrix}
 e^{it}\sinh r \tilde{u}_-(p) \\ e^{-it} \cosh r\tilde{u}_+(p)
\end{pmatrix}.
\end{equation*}
Then  \eqref{nabla-xi'} implies $A\xi=-2\coth 2r\xi$.
For horizontal tangent vectors $ d\wtPhi_r^+(0,0,(X_-,0))$ 
and $ d\wtPhi_r^+(0,0,(0,X_+))$, we obtain
\begin{gather*}
 D_{d\wtPhi_r^+(0,0,(X_-,0))}N'=
\begin{pmatrix}
 e^{it}\sinh r X_- \\ 0
\end{pmatrix}
=-d\wtPhi_r^+(0,0,(AX_-,0)),
\\
 D_{d\wtPhi_r^+(0,0,(0,X_+))}N'=
\begin{pmatrix}
 0 \\  e^{-it}\cosh r X_+
\end{pmatrix}
=-d\wtPhi_r^+(0,0,(0,AX_+)),
\end{gather*}
and
\begin{equation*}
 A(0,(X_-,0))=-\tanh r(0,(X_-,0)),
\quad
 A(0,(0,X_+))=-\coth r(0,(0,X_+)).
\end{equation*}
Note that $\wtPhi_0^+(\Sigma_V)$ is a totally geodesic 
submanifold $H_1^{2k+1}$ in $H_1^{2n+1}$ and 
$\Phi_0^+(\Sigma_S)$ is a totally geodesic complex submanifold 
$\mathbb{CH}^k$ in $\CHn$, so $\Phi_r^+(\Sigma_S)$ is 
nothing but the tube of radius $r$ over a totally geodesic 
$\mathbb{CH}^k$ in $\CHn$, $n\ge 2$, $0\le k\le n-1$ (cf. \cite{Mo}). In the case $k=0$, $\mathbb{CH}^0$ is a point, so that we obtain a geodesic sphere. 
$\Box$
\end{example}

\begin{example}\normalfont Recall the manifold 
\[ V_{1,1}(\RR_1^{n+1})=\{ (u_-,u_+)\in \RR_1^{n+1}\times \RR_1^{n+1} : 
\langle u_-,u_-\rangle =-1, \ \langle u_-,u_+\rangle=0, \ 
\langle u_+,u_+\rangle=1\}. 
\]
With it, let $\Sigma_V:=S^1\times V_{1,1}(\RR_1^{n+1})$, and as in \eqref{com-diag3}, 
we define a map $\tilde{\psi}_V:\Sigma_V\rightarrow M_V$ by $\tilde{\psi}_V(e^{i\theta},(u_-,u_+)):=e^{i\theta}(u_-,u_+)$.
The corresponding map  $\widetilde{\Phi}_r^-:\Sigma_V
\rightarrow H_1^{2n+1}$ ($r\in\RR$) defined by \eqref{wtPhi} is written as
\begin{gather*}
 \wtPhi^-_r(\theta,t,p)=
e^{i\theta}((\cosh r\cosh t+i\sinh r\sinh t)u_-(p) \\
\ \ \ +(\cosh r\sinh t+i\sinh r\cosh t)u_+(p)). 
\end{gather*}
When $r=0$, the image under 
$\pi_-\circ \wtPhi^-_0$ is a totally geodesic real 
projective space $\RR\HH^n$ and when $r>0$, 
 the image under 
$\pi_-\circ \wtPhi^-_r$ is a tube of radius $r$ over 
$\RR\HH^n$.
$\Box$ 
\end{example}

\begin{example} \normalfont For $n\geq 2$, let $\Sigma_V:=S^1\times\RR\times\CC^{n-1}$ and we define maps 
\begin{gather*}
u_-: \Sigma_V\rightarrow H_1^{2n+1},\quad 
u_-(\theta,t,\ppp)=e^{i\theta}
\begin{pmatrix}
 1+\|\ppp\|^2/2 \\ \|\ppp\|^2/2 \\ \ppp
\end{pmatrix},
\\
u_+:\Sigma_V\rightarrow S_2^{2n+1},\quad 
u_+(\theta,t,\ppp)=e^{i\theta}
\begin{pmatrix}
 -i\|\ppp\|^2/2 \\ i(1-\|\ppp\|^2/2) \\ -i\ppp
\end{pmatrix}.
\end{gather*}
If we have the map 
$\tilde{\psi}_0:\Sigma_V\rightarrow M_V$
of \eqref{com-diag3} 
as $\tilde{\psi}:=(u_-,u_+)$,
then the corresponding map 
$\widetilde{\Phi}_r^0:\Sigma_V
\rightarrow H_1^{2n+1}$ ($r\in\RR$) defined by \eqref{wtPhi} is
written by
\begin{gather*}
 \widetilde{\Phi}_r^0(\theta,t,\ppp)=
e^{i\theta}
\begin{pmatrix}
 e^r(it+\|\ppp\|^2/2)+\cosh r \\ e^r(it+\|\ppp\|^2/2)-\sinh r 
\\ e^r \ppp
\end{pmatrix}.
\end{gather*}
Clearly,  $\widetilde{\Phi}_r^0$ is an immersion and  
the image of $\pi_-\circ\widetilde{\Phi}_r^0$ satisfies 
the defining equation of a horosphere 
(\cite{Mo}, Example 6.2), for 
$z_0-z_1=e^{i\theta}e^r$ and $|z_0-z_1|^2=e^{2r}$.
$\Box$
\end{example}

\section{Hopf real hypersurfaces with $\vert\mu\vert=2$}\label{newexamples}

We develop here a method to construct  Hopf real hypersurfaces in $\CHn$ such that $\vert\mu\vert=2$. 

\begin{definition}\label{CKO} Let $G^{n-1}$ be a $(n-1)$-manifold, $n\ge 2$, and $\Omega$ a $\mathfrak{u}(1,n)$-valued form on $G$. We will say that $\Omega$ is a  \emph{CKO}-form if it can be written as
\[\Omega =\begin{pmatrix}
i \alpha_0 & \frac{i}{2}(\alpha_0-\alpha_1) & {}^t \xx-i\, ^ty_0 \\
\frac{i}{2}(\alpha_1-\alpha_0) & i\alpha_1 & -{}^t \xx+i\,^ty_1 \\
\xx+i y_0 & \xx+i y_1 & w_1+ iw_2
\end{pmatrix},
\] 
where $\alpha_0,\alpha_1\in \Omega^1_{G }\otimes \RR$,  $\xx_0,y_0,y_1\in \Omega^1_{G }\otimes \RR^{n-1}$, with $y_0$ and $y_1$ linearly dependent, $w_1\in \Omega^1_{G }\otimes \mathrm{Alt}_{n-1}(\RR)$, $w_2\in\Omega^1_{G }\otimes \mathrm{Sym}_{n-1}(\RR)$, and $\Omega^1_{G }$, $\mathrm{Alt}_{n-1}(\RR)$, $\mathrm{Sym}_{n-1}(\RR)$ denote the space of $1$-forms on $G $, the set of real alternate $(n-1)$-matrices and the set of real symmetric $(n-1)$-matrices, respectively. When $n=2$, we will take $w_1=0$. 
\end{definition}

\begin{proposition} Let $\Omega$ be a $\mathfrak{u}(1,n)$-valued \emph{CKO}-form on a $(n-1)$-dimensional manifold $G$, $n\ge 3$. Assume that $\Omega$ satisfies the following equations:
\begin{align*}
&\qquad  d\alpha_0+2{}^t\xx\wedge y_0 = 0, \quad 
d\alpha_1-2{}^t\xx\wedge y_1 =0, \quad {}^ty_0\wedge y_1=0, \\
& \left\{\begin{array}{l}
d\xx-y_0\wedge \alpha_0 -\frac12 y_1\wedge\alpha_1 +\frac12 y_1\wedge \alpha_0 +w_1\wedge \xx -w_2\wedge y_0=0, \\
dy_0+\xx\wedge \alpha_0+\xx\wedge \alpha_1 +w_2\wedge \xx +w_1\wedge y_0
=0, 
\end{array}
\right. \\
& \left\{\begin{array}{l}
d\xx-\frac12 y_0\wedge \alpha_0 + \frac12 y_0\wedge\alpha_1 -y_1\wedge \alpha_1 +w_1\wedge \xx -w_2\wedge y_1=0, \\
dy_1+\xx\wedge \alpha_0+\xx\wedge \alpha_1 +w_1\wedge y_1
+w_2\wedge \xx=0, 
\end{array}
\right. \\
& \left\{\begin{array}{l}
dw_1+ y_0\wedge{}^ty_0 - y_1\wedge{}^ty_1 +w_1\wedge w_1 
-w_2\wedge w_2=0, \\
dw_2+y_0\wedge {}^t\xx-\xx\wedge{}^ty_0 +\xx\wedge{}^ty_1-y_1\wedge{}^t\xx+w_1\wedge w_2+w_2\wedge w_1
=0. 
\end{array}
\right. 
\end{align*}
Then, for each $x_o\in G$ and each $B_o\in U(1,n)$, there exists a open neighbourhood $U$ of $x_o$ in $G$, and a smooth map $g:U\rightarrow U(1,n)$ such that $g(x_o)=B_o$ and $\Omega=g^{-1}dg_{x_o}$.
\end{proposition}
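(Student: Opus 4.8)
The plan is to recognise the statement as a local instance of the classical Maurer--Cartan integrability theorem (a Frobenius-type result): a $\mathfrak{u}(1,n)$-valued $1$-form $\Omega$ on a manifold $G$ is, near any point $x_o$ and for any prescribed value $B_o\in U(1,n)$, of the form $\Omega=g^{-1}dg$ for some smooth $g:U\to U(1,n)$ with $g(x_o)=B_o$ if and only if $\Omega$ satisfies the Maurer--Cartan equation $d\Omega+\Omega\wedge\Omega=0$, where $\Omega\wedge\Omega$ denotes the matrix product combined with the wedge product of entries. Accordingly the proof divides into two parts: (a) quoting (and, for completeness, sketching) this integrability theorem; and (b) verifying that the displayed system of equations is \emph{precisely} the Maurer--Cartan equation for $\Omega$ written out in the block form of a CKO-form.

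For (a), one works on $P=G\times U(1,n)$ and considers the rank-$(n-1)$ distribution $\mathcal D_{(x,B)}=\{(v,\,B\,\Omega_x(v)):v\in T_xG\}$; since $\Omega_x(v)\in\mathfrak{u}(1,n)$, one has $B\,\Omega_x(v)\in T_BU(1,n)$, so $\mathcal D$ is tangent to $P$. Writing $\theta$ for the matrix Maurer--Cartan form of $U(1,n)$, the $\mathfrak{u}(1,n)$-valued $1$-form $\beta:=\mathrm{pr}_2^*\theta-\mathrm{pr}_1^*\Omega$ has annihilator exactly $\mathcal D$, and, using $d\theta=-\theta\wedge\theta$, a direct computation gives $d\beta|_{\mathcal D}=-\mathrm{pr}_1^*\!\big(d\Omega+\Omega\wedge\Omega\big)$. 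Hence, by the Frobenius theorem, $\mathcal D$ is integrable exactly when the Maurer--Cartan equation holds. As $\mathrm{pr}_1$ restricts to a linear isomorphism on each $\mathcal D_{(x,B)}$, the leaf through $(x_o,B_o)$ is, on a neighbourhood $U$ of $x_o$, the graph of a smooth map $g:U\to U(1,n)$ with $g(x_o)=B_o$, and tangency of the graph to $\mathcal D$ reads $dg=g\,\Omega$, i.e.\ $g^{-1}dg=\Omega$ on $U$.

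For (b), write $\Omega$ in the $1\,|\,1\,|\,(n-1)$ block form of Definition \ref{CKO} and compute $d\Omega+\Omega\wedge\Omega$ block by block, using repeatedly that ${}^t\xx\wedge\xx={}^ty_0\wedge y_0={}^ty_1\wedge y_1=0$. The $(1,1)$-block yields $d\alpha_0+2\,{}^t\xx\wedge y_0=0$ and the $(2,2)$-block yields $d\alpha_1-2\,{}^t\xx\wedge y_1=0$; the $(1,2)$-block splits into a real part giving ${}^ty_0\wedge y_1=0$ and an imaginary part which is automatically satisfied once the first two equations hold; the $(3,1)$-block, separated into real and imaginary parts, gives the pair of equations for $d\xx$ and $dy_0$; the $(3,2)$-block gives the analogous pair for $d\xx$ and $dy_1$; and the $(3,3)$-block, separated into its alternating and symmetric parts, gives the equations for $dw_1$ and $dw_2$. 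The CKO-ansatz (in particular the linear dependence of $y_0$ and $y_1$, and $w_1=0$ for $n=2$) is exactly what makes $d\Omega+\Omega\wedge\Omega$ stay inside the same block shape, so that the Maurer--Cartan equation decomposes into precisely the displayed relations and nothing more; the blocks $(2,1)$, $(1,3)$, $(2,3)$ carry no new information, being determined from those already treated by the relation $\Omega^*S+S\Omega=0$ defining $\mathfrak{u}(1,n)$. Running the same computation in reverse shows, conversely, that the displayed system implies $d\Omega+\Omega\wedge\Omega=0$.

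The main obstacle is the bookkeeping of part (b): carrying out the block multiplication $\Omega\wedge\Omega$ with due attention to the sign change under transposition of a wedge of matrix-valued $1$-forms, and then matching the real/imaginary and symmetric/alternating components of each block with the listed equations. Once that identification is complete, part (a) applies verbatim and delivers the required map $g$.
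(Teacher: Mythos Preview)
Your proposal is correct and follows essentially the same approach as the paper: the paper's proof simply asserts that the displayed equations are equivalent to the Maurer--Cartan equation for $\Omega$ and cites the textbook of Ivey--Landsberg for the local integrability result, while you spell out both the Frobenius argument behind that theorem and the block-by-block identification of the equations with $d\Omega+\Omega\wedge\Omega=0$. Your treatment is more detailed than the paper's (which is essentially a one-line citation plus a remark explaining that ${}^ty_0\wedge y_1=0$ forces the linear dependence of $y_0$ and $y_1$ required by the CKO definition), but the underlying idea is identical.
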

\begin{proof} These equations are equivalent to the \textit{Maurer-Cartan} equation for $\Omega$, see the textbook \cite{IL}.
 Note that ${}^ty_0\wedge y_1=0$ implies that for any $X,Y\in T_xG$, 
\begin{align*}
&0=({}^ty_0\wedge y_1)(X,Y)={}^ty_0(X)y_1(Y)-{}^ty_0(Y)y_1(X)\\
&=\langle y_0(X),y_1(Y)\rangle -\langle y_0(Y),y_1(X)\rangle.
\end{align*}
So, the $\RR^{n-1}$-valued $1$-forms $y_0$ and $y_1$ on $G$ are linearly dependent. This is trivial when $n=2$. 
\end{proof}
\begin{remark} \normalfont When $\dim G=1$, there is no need for integrability conditions on $\Omega$. $\Box$
\end{remark}

On $\RR_1^{n+1}$, $n\ge 2$, we take the standard metric 
$\langle x,y\rangle =-x_0y_0+\sum_{i=1}^nx_iy_i$, for $x,y\in \RR_1^{n+1}$. The hyperbolic space is the hyperquadric 
\[ \mathbb{RH}^n = \{ p\in \RR_1^{n+1} : \langle p,p\rangle=-1\}.\]
Take the light-like cone
\[ \mathcal{C}=\{\eta\in \RR_1^{n+1} : \langle \eta,\eta\rangle =0\}.\]
Given a point $\eta\in\mathcal{C}$, define 
\[ \mathbf{H}^{n-1}=\{p\in \mathbb{RH}^n : \langle p,\eta\rangle=1\}.\]
Let us see that this is a horosphere in $\mathbb{RH}^n$. We call $\chi:\mathbb{RH}^n\to \RR_1^{n+1}$ the position vector, so that the tangent space at $p$ is 
\[ T_p\mathbf{H}^{n-1} =\{ X\in \RR_1^{n+1} : \langle X,p\rangle = 0, \langle X,\eta \rangle=0\}.\]
It is easy to check that $N=-\chi-\eta$ is a globally defined unit normal to $\mathbf{H}^{n-1}$ on $\mathbb{RH}^n$. 
If $D$ and $\nabla$ are the Levi-Civita connection of $\RR_1^{n+1}$ and $\mathbb{RH}^n$, respectively, and $A_N$ is the shape operator associated with $N$, then for each $X \in T\mathbf{H}^{n-1}$, $A_NX=-\nabla_XN=-D_XN+\langle X,N\rangle\chi =X$, because $\eta$ is constant. As usual, we embed $\mathbb{RH}^n$ as a totally geodesic, totally real submanifod in $\CHn$. Then for each $\eta\in \mathcal{C}$, we have (up to isometries) a totally umbilical horosphere in $\mathbb{RH}^n$, which is totally real in $\mathbb{CH}^n$. Since the Lie group $SO(1,n)< U(1,n)$ acts by isometries on $\RR_1^{n+1}$, on $\mathbb{RH}^n$ and on the light-cone $\mathcal{C}$, it is enough to know one of these horospheres. 

Now we consider a  parametrization of the horosphere in $\mathbb{RH}^n$ for $\eta=\,^t(-1,1,0,\ldots,0)$, with $\mathbf{J}=\RR^+$:
\[\mathbf{f}:\mathbf{J}\times S^{n-2}\to \RR H^n, \quad 
\mathbf{f}(\lambda,\ppp)=\begin{pmatrix} 1+\frac{\lambda^2}{2} \\ -\lambda^2/2 \\ \lambda \ppp \end{pmatrix}.
\]
\begin{remark}\label{1-horosphere} \normalfont A $1$-dimensional horosphere is simply a horocycle. Then, we take the curve 
\[\mathbf{f}:\RR\to \mathbb{RH}^2,\quad \mathbf{f}(\lambda)=\begin{pmatrix}
1+\lambda^2/2 \\ -\lambda^2/2 \\ \lambda
\end{pmatrix}.
\]
We need to unify the notation in the following way: When $n\geq 3$, $\mathbf{J}=\RR^+$, $\ppp\in S^{n-2}$, $\dim T_{\ppp}S^{n-2}=n-2$, and we will usually denote $X\in T_{\ppp}S^{n-2}$. But when $n=2$, then $\mathbf{J}=\RR$, $S^0=\{\ppp=1\}\subset\RR$ and $T_{\ppp}S^{n-2}=\{0\}$, so that $X\in T_{\ppp}S^0$ can only be $X=0$. $\Box$
\end{remark}

We construct the following auxiliary maps:
\begin{gather*}(\tilde{v}_{-},\tilde{v}_+):\mathbf{J}\times S^{n-2}\rightarrow V_{1,1}(\CC_1^{n+1}), \\
\tilde{v}_-(\lambda,\ppp)=
\begin{pmatrix}
1+\frac{\lambda^2}{2} \\ -\lambda^2/2 \\ \lambda \ppp
\end{pmatrix}, \quad 
\tilde{v}_+(\lambda,\ppp)=
\begin{pmatrix}
\frac{-i}{2}\lambda^2 \\i\Big( \frac{\lambda^2}{2}-1\Big) \\ -i \lambda\ppp 
\end{pmatrix}. 
\end{gather*}

Recall that $U(1,n)$ acts naturally by isometries on $H_1^{2n+1}$ and on $S_2^{2n+1}$ by $(g,\mathbf{q})\mapsto g\mathbf{q}$. 
With this, given $G^{n-1}$ a $(n-1)$-dimensional manifold, let $g:G\to U(1,n)$ be a smooth immersion. As in Section \ref{Hopf-CHn}, take 
$\Sigma_V:=S^1\times G\times\RR\times \mathbf{J}
\times S^{n-2}$ and define the map
\begin{gather*}\tilde{\psi}_0:\Sigma_V \rightarrow M_V=V_{1,1}(\CC_1^{n+1}),\\ \tilde{\psi}_0(\theta,x,h,\lambda,\ppp):=e^{i\theta}g(x)\Big( (1-ih)\tilde{v}_{-}(\lambda,\ppp)-h\tilde{v}_+(\lambda,\ppp),
-h\tilde{v}_{-}(\lambda,\ppp)+(1+ih)\tilde{v}_+(\lambda,\ppp)\Big).
\end{gather*}
We obtain the maps $\psi_0$, $\varphi_0$ and the projections, as in Section \ref{Hopf-CHn}, having a similar diagram to \eqref{com-diag}. In order to compute what we really need, we recall the projection \eqref{psr} for $s=r=0$, so that 
\begin{gather}
\tilde{\Psi}:\Sigma_V\rightarrow H_1^{2n+1}, \quad 
\tilde{\Psi}:=\tilde{p}_0^0\circ\tilde{\psi}_0, 
\nonumber \\
\tilde{\Psi}(\theta,x,h,\lambda,\ppp)= 
e^{i\theta}g(x) \begin{pmatrix} 1+\lambda^2/2-i h \\ -\lambda^2/2+i h \\  \lambda \ppp \end{pmatrix}.  \label{position-vector} 
\end{gather}
By using the Hopf map $\pi_{-}:H_1^{2n+1}\to \CHn$, we would like to obtain a real hypersurface  
\begin{equation}\label{theexample}
\xymatrix{
 \Sigma_V \ar[r]^{\tilde{\Psi}} \ar[d]_{\pi_{-}} & H_1^{2n+1} \ar[d]^{\pi_{-}} \\
 M \ar[r]^{\Psi_g} & \CHn
}
\end{equation}	
where $M=\pi_{-}(\Sigma_V)\cong G\times\RR\times\mathbf{J}\times S^{n-2}$. The map $\Psi_g$ is the desired example.

\begin{remark} \normalfont Intuitively, the images by $\tilde{\Psi}$ of the slices $x=$constant and $(s,\lambda,\ppp)=$constant have to be transversal each other in order to obtain a immersion that will produce a real hypersurface in $\CHn$. 
$\Box$
\end{remark}

We compute the horizontal part of the differential of $\tilde{\Psi}$ with respect to $\pi_{-}$, by using suitable identifications. A unit vector of the vertical direction is 
\[ d\tilde{\Psi}(\partial_{\theta})=i\tilde{\Psi} =e^{i\theta} g(x)\begin{pmatrix}
h+i	\big(1+\lambda^2/2\big) \\ -h-\frac{i}{2}\lambda^2 \\
 i \lambda \ppp
\end{pmatrix}.\]
Simple computations show
\[
d\tilde{\Psi}(\partial_h) = e^{i\theta} g(x) \begin{pmatrix} -i \\ i \\ 0
\end{pmatrix}, \quad 
\langle d\tilde{\Psi}(\partial_h),i\tilde{\Psi}\rangle  = 1.
\]
For the sake of simplicity, we denote 
\[\ww={}^t(-1,1,0,\ldots,0),\quad d\tilde{\Psi}(\partial_h) = e^{i\theta} g(x)\,i \ww.\]
From here,
\begin{align*}
& \mathcal{H} d\tilde{\Psi}(\partial_h) =  d\tilde{\Psi}(\partial_h) +
\langle d\tilde{\Psi}(\partial_h),i\tilde{\Psi}\rangle i \tilde{\Psi} 
= d\tilde{\Psi}(\partial_h) + i \tilde{\Psi}
= e^{i\theta} g(x) \begin{pmatrix} h+\frac{i}{2}\lambda^2 \\ 
-h + i\big( 1- \lambda^2/2\big) \\ i \lambda \ppp \end{pmatrix}.
\end{align*}
Given $X\in T_{\ppp}S^{n-2}$,  
\begin{gather*} d\tilde{\Psi}(\partial_{\lambda}) = e^{i\theta} g(x)\begin{pmatrix} \lambda \\ -\lambda \\ \ppp \end{pmatrix}, \quad
d\tilde{\Psi}(X)=e^{i\theta} g(x) \begin{pmatrix} 0 \\ 0 \\  \lambda X\end{pmatrix}, \\ 
\langle d\tilde{\Psi}(\partial_{\lambda}), i \tilde{\Psi}\rangle=0, \quad \mathcal{H}d\tilde{\Psi}(\partial_{\lambda})=d\tilde{\Psi}(\partial_{\lambda}),  \quad
\langle d\tilde{\Psi}(X),i \tilde{\Psi}\rangle = 0, \quad \mathcal{H}d\tilde{\Psi}(X)=d\tilde{\Psi}(X).
\end{gather*}
The induced metric on $\RR\times\RR^+\times S^{n-2}$ is 
\begin{gather*}
\langle \mathcal{H}d\tilde{\Psi}(\partial_h),\mathcal{H}d\tilde{\Psi}(\partial_h)\rangle = 1, \quad
\langle
d\tilde{\Psi}(\partial_{\lambda}), 
d\tilde{\Psi}(\partial_{\lambda})\rangle = 1, \quad
\langle
d\tilde{\Psi}(X),
d\tilde{\Psi}(X)\rangle = \lambda^2\|X\|^2, \\
\langle \mathcal{H}d\tilde{\Psi}(\partial_h),d\tilde{\Psi}(X)\rangle = 
\langle \mathcal{H}d\tilde{\Psi}(\partial_h),d\tilde{\Psi}(\partial_{\lambda})\rangle =
\langle 
d\tilde{\Psi}(\partial_{\lambda}),d\tilde{\Psi}(X)\rangle =  0.
\end{gather*}
In addition, $\mathcal{H} d\tilde{\Psi}(\partial_h)$ satisfies
\begin{gather*}
D_{ \mathcal{H} d\tilde{\Psi}(\partial_h)} \mathcal{H}d\tilde{\Psi}(\partial_h) 
= D_{d\tilde{\Psi}(\partial_h) + i \tPsi} \mathcal{H}d\tilde{\Psi}(\partial_h) 
= D_{d\tilde{\Psi}(\partial_h) + i \tPsi} 
\left[
e^{i\theta}  g(x) \begin{pmatrix} h+\frac{i}{2}\lambda^2 \\ 
-h+ i\big( 1- \lambda^2/2\big) \\ i \lambda \ppp \end{pmatrix}
\right] \\ 
=  e^{i\theta}g(x) \begin{pmatrix} 1 \\ -1 \\ 0 \end{pmatrix}
+ i e^{i\theta} g(x) \begin{pmatrix} h+\frac{i}{2}\lambda^2 \\ 
-h + i\big( 1- \lambda^2/2\big) \\ i \lambda \ppp \end{pmatrix} 
=  e^{i\theta} g(x) \begin{pmatrix} 1 -\lambda^2/2 +ih \\ 
-2 +\lambda^2/2 -i h\\ -\lambda \ppp \end{pmatrix}.
\end{gather*}
If $\tilde{\nabla}$ is the Levi-Civita connection on $H_1^{2n+1}$ induced from the flat connection of $\mathbb{C}_1^{n+1}$, we compute
\begin{gather*}
\tilde{\nabla}_{\mathcal{H} d\tilde{\Psi}(\partial_h)} \mathcal{H} d\tilde{\Psi}(\partial_h)  = 
D_{\mathcal{H} d\tilde{\Psi}(\partial_h)} \mathcal{H} d\tilde{\Psi}(\partial_h) 
+\langle D_{\mathcal{H} d\tilde{\Psi}(\partial_h)} \mathcal{H} d\tilde{\Psi}(\partial_h),\tilde{\Psi}\rangle\tilde{\Psi} \\
=e^{i\theta} g(x) \begin{pmatrix}1-\frac{\lambda^2}{2}+ih \\ -2+\frac{\lambda^2}{2} -ih \\ -\lambda\ppp \end{pmatrix} 
-e^{i\theta} g(x) \begin{pmatrix} 1 +\frac{\lambda^2}{2}-ih \\ -\frac{\lambda^2}{2}+ih \\ \lambda \ppp \end{pmatrix} 
=e^{i\theta} \begin{pmatrix} -\lambda^2+2hi \\ -2+\lambda^2-2h i \\ -2\lambda\ppp \end{pmatrix}
=2 i \mathcal{H} d\tilde{\Psi}(\partial_h). 
\end{gather*}
This shows that the image of each integral curve of $\mathcal{H}d\tilde{\Psi}(\partial_h)$ under the Hopf projection $\pi_{-}$ is contained in a totally geodesic $\mathbb{CH}^1$ in $\mathbb{CH}^n$ as a circle of constant curvature $2$. Recall that this is the case of the integral curves of the structure vector field $\xi$ of Hopf hypersurfaces $M^{2n-1}$ in $\mathbb{CH}^n(-4)$ with $A\xi=2\xi$. 
\par 
Next, we consider the differential of the map $g:G\to U(1,n)$. The pull back of the Maurer-Cartan form on $U(1,n)$ by $g$ is a $\mathfrak{u}(1,n)$-valued 1-form on $G$, which can be written as follows
\[\Omega:=g^{-1}dg =\left(
\begin{array}{c|c|c}
i\alpha_0 & \overline{\beta} &  {}^tx_0-i\,{}^ty_0   \\ \hline 
\beta & i\alpha_1 & -\,{}^tx_1+i\,{}^ty_1    \\ \hline 
x_0+i\,y_0 & x_1+i\,y_1 & w_1+i\, w_2
\end{array}\right), 
\]
where $\alpha_0,\alpha_1\in \Omega^1_{G }\otimes \RR$, $\beta\in \Omega^1_{G }\otimes \mathbb{C}$, $x_0,x_1,y_0,y_1\in \Omega^1_{G }\otimes \RR^{n-1}$, $w_1\in \Omega^1_{G }\otimes \mathrm{Alt}_{n-1}(\RR)$, $w_2\in\Omega^1_{G }\otimes \mathrm{Sym}_{n-1}(\RR)$, and $\Omega^1_{G }$, $\mathrm{Alt}_{n-1}(\RR)$, $\mathrm{Sym}_{n-1}(\RR)$ denote the space of $1$-forms on $G $, the set of real alternate $(n-1)$-matrices and the set of real symmetric $(n-1)$-matrices, respectively. When $n=2$, then $w_1=0$. We also denote $w=w_1+iw_2$, and $z_l=x_l+i y_l$ for $l=0,1$, and its conjugate as 	$\overline{z_l}=x_l-iy_l$, $l=0,1$. Given $Y\in T_xG ^{n-1}$, we have
\begin{align*}
d\tilde{\Psi}_x(Y) = & e^{i\theta}g(x) 
\left(
\begin{array}{c|c|c}
i\alpha_0(Y) & \overline{\beta}(Y) & {}^t\overline{z_0}(Y) \\ \hline 
\beta(Y) & i\alpha_1(Y) & -{}^t\overline{z_1}(Y) \\ \hline 
z_0(Y) & z_1(Y) & w(Y) 
\end{array}\right)
\begin{pmatrix} 1+\lambda^2/2 -i h \\ -\lambda^2/2 +ih \\ \lambda \ppp \end{pmatrix}
=e^{i\theta}g(x) \begin{pmatrix}
 \tilde{\Psi}_1 \\ \tilde{\Psi}_2 \\ \tilde{\Psi}_3
\end{pmatrix},
\end{align*}
where
\begin{align*}
\tilde{\Psi}_1 = &\Big( h+i\Big(1+\frac{\lambda^2}{2}\Big)  \Big) \alpha_0(Y) 
+\Big(-\frac{\lambda^2}{2} +ih\Big)\overline{\beta}(Y) 
+\lambda \langle x_0(Y), \ppp \rangle + i\lambda\langle y_0(Y),\ppp\rangle,
\\
\tilde{\Psi}_2 = & \Big(1+\frac{\lambda^2}{2}-ih\Big)\beta(Y) -\Big(h+i\frac{\lambda^2}{2}\Big)\alpha_1(Y) 
-\lambda \langle x_1(Y),\ppp\rangle +i \lambda \langle y_1(Y),\ppp\rangle, \\
\tilde{\Psi}_3 = & \Big(1+\frac{\lambda^2}{2}-ih\Big) z_0(Y) +\Big(-\frac{\lambda^2}{2}+i h\Big) z_1(Y) 
+\lambda w(Y)\ppp. 
\end{align*}

Now, we assume that $\tPsi$ and $\Psi$ are immersions, cf. \eqref{theexample}. According to Section \ref{Hopf-CHn}, we obtain the next equation  
\[\mathcal{H}d\tPsi(\partial_h)=\xi'  = -(JN)'=-iN',\]
where $\xi'$ is the horizontal lift of the structure vector $\xi$ of $\psi:M^{2n-1}\to \CHn(-4)$. The horizontal lift $N'$ of a unit normal field $N$ of $M^{2n-1}$ is
\begin{equation}\label{Nprime} 
N' = i\xi'=i \mathcal{H}d\tPsi(\partial_h) = e^{i\theta}g(x) 
\begin{pmatrix} -\frac{\lambda^2}{2}+i h \\  \frac{\lambda^2}{2}-1 - i h \\ -\lambda \ppp 
\end{pmatrix}.
\end{equation}
Given $X\in T_{\ppp}S^{n-2}$ and $Y\in T_xG^{n-2}$, a long but straightforward computation shows 
\begin{gather*} \langle \mathcal{H}d\tPsi(\partial_h),N'\rangle = 
\langle d\tPsi(\partial_{\lambda}),N'\rangle = \langle d\tPsi(X),N'\rangle = 0, \\
\langle d\tPsi(Y), N'\rangle = \lambda \langle x_1(Y)-x_0(Y),\ppp\rangle
+h(\alpha_1(Y)-\alpha_0(Y)-2\mathrm{Im}\beta(Y))+\mathrm{Re}\beta(Y).
\end{gather*}
From this, by fixing some variables and moving others, we obtain that $ \langle d\tPsi(Y),N'\rangle = 0$  holds on $\Sigma_V$ if, and only if, 
\begin{equation} \label{gdg}
\mathrm{Re}\beta = 0, \quad \mathrm{Im}\beta = \frac12( \alpha_1-\alpha_0), \quad x_0=x_1.
\end{equation}
Putting $\xx:=x_0=x_1$, $\beta = \frac{i}{2}(\alpha_1-\alpha_0)$, the form $\Omega=g^{-1}dg$ simplifies to 
\[\Omega=g^{-1}dg = \begin{pmatrix}
i \alpha_0 & \frac{i}{2}(\alpha_0-\alpha_1) & {}^t \xx-i\, ^ty_0 \\
\frac{i}{2}(\alpha_1-\alpha_0) & i\alpha_1 & -{}^t \xx+i\,^ty_1 \\
\xx+i y_0 & \xx+i y_1 & w_1+ iw_2
\end{pmatrix}.
\]
In other words, $\Omega=g^{-1}dg$ is a CKO-form. 

The next step is to compute the shape operator of $\Psi=\pi_{-}\circ\tilde{\Psi}$, i.~e., the shape operator of the induced real hypersurface in $\CHn$, so by abuse of notation, we will also denote it by $A$. 
It suffices to study the shape operator of $\tilde{\Psi}$. By \eqref{position-vector} and \eqref{Nprime}, we describe 
\[N' = -\tilde{\Psi} -e^{i\theta} g(x)\ww.\] 
For horizontal vectors $d\tilde{\Psi}(\partial_{\lambda})$ and $d\tilde{\Psi}(X)$, $X\in T_{\ppp}S^{n-2}$, simple computations give 
\begin{align*}
& D_{d\tilde{\Psi}(\partial_{\lambda})} N'=D_{d\tilde{\Psi}(\partial_{\lambda})} (-\tilde{\Psi}) = -d\tilde{\Psi}(\partial_{\lambda}), \quad 
D_{d\tilde{\Psi}(X)} N'= D_{d\tilde{\Psi}(X)} (\tilde{\Psi}) =  -d\tilde{\Psi}(X). 
\end{align*}
The Gauss' equation $D_{d\tilde{\Psi}(Z)} N' = - d\tilde{\Psi}(AZ)$ implies
\[ A\partial_{\lambda} = \partial_\lambda, \quad AX=X, \ X\in T_{\ppp}S^{n-2}.\]
This shows $\dim V_1\ge n-1$. For the horizontal vector $\xi'=\mathcal{H}d\tilde{\Psi}(\partial_h)=d\tilde{\Psi}(\partial_h)+i\tilde{\Psi}$, we obtain 
\begin{gather*} \mathcal{H}D_{\mathcal{H}d\tilde{\Psi}(\partial_h)} N' = \mathcal{H}D_{d\tilde{\Psi}(\partial_h)+i\tilde{\Psi}}N' =
\mathcal{H}D_{d\tilde{\Psi}(\partial_h)}N'+\mathcal{H}D_{i\tilde{\Psi}}N'\\
=\mathcal{H}D_{d\tilde{\Psi}(\partial_h)}(-\tPsi)+iN'
= -\mathcal{H}d\tilde{\Psi}(\partial_h)+iN'=-2\xi'.
\end{gather*}
Since $\tilde{\Psi}$ is an immersion, there is a unique $\hat{\xi}\in T\Sigma_V$ such that $d\tilde{\Psi}(\hat{\xi})=\xi'$. Together with $\mathcal{H} D_{\mathcal{H}d\tilde{\Psi}(\partial_h)}N'=-d\tilde{\Psi}(A \hat{\xi})$, we have
\[ A\hat{\xi} = 2\hat{\xi}.\]
Given $Y\in T_xG^{n-1}$, we compute
\begin{align*}
 D_{\mathcal{H}d\tilde{\Psi}(Y)} N' &= D_{d\tilde{\Psi}(Y)+\langle d\tilde{\Psi}(Y),i\tilde{\Psi}\rangle i \tilde{\Psi}} N' = -d\tilde{\Psi}(Y)-e^{i\theta} dg(Y)\ww
+ \langle d\tilde{\Psi}(Y), i\tilde{\Psi}\rangle i N' \\
& = -d\tilde{\Psi}(Y) -e^{i\theta} g(x) \Omega(Y)\ww 
-\langle d\tilde{\Psi}(Y), i\tilde{\Psi}\rangle \xi'.
\end{align*}
By taking the horizontal part, 
\[\mathcal{H} D_{\mathcal{H}d\tilde{\Psi}(Y)} N'  = - Hd\tilde{\Psi}(Y) -\mathcal{H}\big( e^{i\theta} g(x)\Omega(Y)\ww\big) 
- \langle d\tilde{\Psi}(Y), i\tilde{\Psi}\rangle \xi'.\]
We finally obtain 
\[Y\in T_xG,\quad  AY = Y + \big(d\tilde{\Psi}\big)^{-1} \mathcal{H}\Big(e^{i\theta} g(x)\Omega(Y)\ww 
\Big) +\langle d\tilde{\Psi}(Y),i\tilde{\Psi}\rangle \hat{\xi}. 
\]

If $A$ is the Weingarten endomorphism of a real hypersurface in $\CHn$, then we denote the eigenspace associated with a principal curvature $\rho$ by $V_{\rho}$. We have proved the following result. 

\begin{theorem}\label{Axi2xi} Let $G^{n-1}$ be a $(n-1)$-manifold, $n\ge 2$, and $g:G^{n-1}\rightarrow U(1,n)$ an immersion with associated $\mathfrak{u}(1,n)$-valued form $\Omega=g^{-1}dg$. Construct the maps $\tilde{\Psi}$ and $\Psi_g$ as in \eqref{position-vector} and \eqref{theexample}, and assume that they are immersions. The following are equivalent:
\begin{enumerate}
\item $\Omega$ is  a \emph{CKO}-form. 
\item The map $\Psi_g:M^{2n-1}\rightarrow \CHn$ is a real Hopf hypersurface such that its Weingarten endomorphism $A$ satisfies $A\xi=2\xi$ and $n-1\le \dim V_{1}$. 
\end{enumerate}
\end{theorem}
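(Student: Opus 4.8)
Almost everything needed is already displayed in the paragraphs preceding the statement, so the proof is mostly a matter of organising those computations and supplying the reverse implication. First, $\Psi_g(M)$ is a real hypersurface of $\CHn$ because $\Psi_g$ is an immersion of the $(2n-1)$-dimensional manifold $M\cong G\times\RR\times\mathbf{J}\times S^{n-2}$. For \emph{(i)$\Rightarrow$(ii)}: if $\Omega=g^{-1}dg$ is a CKO-form then, comparing with the general expression for $\Omega$, one has $x_0=x_1=:\xx$, $\mathrm{Re}\beta=0$ and $\mathrm{Im}\beta=\tfrac12(\alpha_1-\alpha_0)$. Substituting these three relations into the formula obtained above for $\langle d\tilde\Psi(Y),N'\rangle$ makes it vanish for every $Y\in T_xG$; together with $\langle N',d\tilde\Psi(\partial_h)\rangle=\langle N',d\tilde\Psi(\partial_\lambda)\rangle=\langle N',d\tilde\Psi(X)\rangle=\langle N',i\tilde\Psi\rangle=0$ this shows that the horizontal unit field $N'$ is orthogonal to $T\tilde\Psi(\Sigma_V)$, hence $N=d\pi_-(N')$ is a unit normal of $\Psi_g(M)$. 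Consequently $\xi':=-iN'=\mathcal{H}d\tilde\Psi(\partial_h)$ is the horizontal lift of $\xi=-JN$, and the identities $\mathcal{H}D_{\xi'}N'=-2\xi'$, $D_{d\tilde\Psi(\partial_\lambda)}N'=-d\tilde\Psi(\partial_\lambda)$ and $D_{d\tilde\Psi(X)}N'=-d\tilde\Psi(X)$, read through the (lifted) Weingarten formula, give $A\xi=2\xi$, $A\partial_\lambda=\partial_\lambda$ and $AX=X$ for $X\in T_\ppp S^{n-2}$. Thus $\Psi_g(M)$ is Hopf with $\mu=2$, and $\RR\partial_\lambda\oplus T_\ppp S^{n-2}\subseteq V_1$ yields $\dim V_1\ge n-1$.

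For \emph{(ii)$\Rightarrow$(i)}: let $\mathcal{N}$ be a unit normal of $\Psi_g(M)$, $\mathcal{N}'$ its horizontal lift, and set $\xi=-J\mathcal{N}$, $\xi'=-i\mathcal{N}'$. It is enough to prove $\mathcal{N}'=N'$: then $\langle d\tilde\Psi(Y),N'\rangle=\langle d\tilde\Psi(Y),\mathcal{N}'\rangle=0$ for every $Y\in T_xG$, so the displayed formula for $\langle d\tilde\Psi(Y),N'\rangle$ forces $x_0=x_1$, $\mathrm{Re}\beta=0$ and $\mathrm{Im}\beta=\tfrac12(\alpha_1-\alpha_0)$; since $\Omega=g^{-1}dg$ satisfies the Maurer--Cartan equation it automatically fulfils ${}^ty_0\wedge y_1=0$ (one of the structure equations listed in the Proposition above), so $y_0,y_1$ are linearly dependent, while $w_1\in\Omega^1_G\otimes\mathrm{Alt}_1(\RR)=\{0\}$ when $n=2$; hence $\Omega$ is a CKO-form. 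To obtain $\mathcal{N}'=N'$, note that $A\xi=2\xi$ gives $\bar\nabla_\xi\xi=2\mathcal{N}$, so the integral curves of $\xi$ have constant curvature $2$ and lie in totally geodesic $\mathbb{CH}^1\subset\CHn$ (after lifting they satisfy $\tilde\nabla_{\dot\gamma}\dot\gamma=2\,i\,\dot\gamma$), while the unit tangent field $\mathcal{H}d\tilde\Psi(\partial_h)=d\tilde\Psi(\partial_h)+i\tilde\Psi$ of $\Psi_g(M)$ has integral curves with exactly the same property, by the computation $\tilde\nabla_{\mathcal{H}d\tilde\Psi(\partial_h)}\mathcal{H}d\tilde\Psi(\partial_h)=2\,i\,\mathcal{H}d\tilde\Psi(\partial_h)$ above. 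One then shows, using $A\xi=2\xi$, \eqref{Hopf-pc2} and $\dim V_1\ge n-1$, that on a Hopf hypersurface with $\mu=2$ the foliation by constant-curvature-$2$ circles is unique, hence coincides with the $\xi$-foliation; therefore $\mathcal{H}d\tilde\Psi(\partial_h)=\xi'$ and $N'=i\mathcal{H}d\tilde\Psi(\partial_h)=i\xi'=\mathcal{N}'$.

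The main obstacle is precisely this last uniqueness claim. Writing a competing unit field as $v=\cos\omega\,\xi+\sin\omega\,W$ with $W$ a unit field orthogonal to $\xi$, the normal component of $\bar\nabla_v v=2Jv$ reduces the question to the scalar identity $\sin^2\omega\,\langle AW,W\rangle=2\cos\omega\,(1-\cos\omega)$, which rules out $\cos\omega<1$ as soon as the eigenvalues of $A|_{\{\xi\}^\perp}$ are suitably controlled (for instance all equal to $1$, as for the horosphere). Dealing with the general eigenvalue pattern --- or, equivalently, identifying the present parametrisation with the twistor construction of Theorem \ref{th-construction} applied to a horizontal $(2n-2)$-submanifold of $M_Z^0$ as in Theorem \ref{thm-cv-CHn} --- is where I expect the genuine difficulty of the proof to lie.
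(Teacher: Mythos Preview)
Your treatment of (i)$\Rightarrow$(ii) is exactly the paper's: the authors' proof \emph{is} the string of computations preceding the theorem, and you have reproduced its logic faithfully --- the explicit vector $N'$ is shown to be orthogonal to every tangent direction precisely when the three relations \eqref{gdg} hold, and then the Weingarten identities $A\hat\xi=2\hat\xi$, $A\partial_\lambda=\partial_\lambda$, $AX=X$ are read off from $N'=-\tilde\Psi-e^{i\theta}g(x)\ww$.

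Where you diverge from the paper is in (ii)$\Rightarrow$(i). The paper does \emph{not} supply the ``uniqueness of the $\kappa=2$ circle foliation'' argument you sketch; it simply takes the computed equivalence
\[
\langle d\tilde\Psi(Y),N'\rangle=0\ \text{for all }Y\in T_xG\quad\Longleftrightarrow\quad
x_0=x_1,\ \mathrm{Re}\,\beta=0,\ \mathrm{Im}\,\beta=\tfrac12(\alpha_1-\alpha_0)
\]
as the content of the reverse implication, implicitly identifying ``$\Psi_g$ is Hopf with $A\xi=2\xi$'' with ``the specific vector $N'$ of \eqref{Nprime} is the unit normal''. In other words, the paper treats statement (ii) as shorthand for ``the construction of Section~\ref{Hopf-CHn} produces a Hopf hypersurface with the stated data'', in which case $N'$ is the normal by definition and the three CKO relations follow at once; your observation that $y_0,y_1$ must then be linearly dependent via the Maurer--Cartan equation ${}^ty_0\wedge y_1=0$ is correct and is exactly how the paper closes the loop (see the proof of the Proposition following Definition~\ref{CKO}).

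So the gap you worry about --- why an \emph{a priori} Hopf structure on $\Psi_g(M)$ with $\mu=2$ and $\dim V_1\ge n-1$ forces $\mathcal{N}'=\pm N'$ --- is not addressed by the paper either. Your attempt to close it via competing circle foliations is more than the authors provide, but, as you yourself note, the scalar identity $\sin^2\!\omega\,\langle AW,W\rangle=2\cos\omega(1-\cos\omega)$ does not by itself rule out $\cos\omega<1$ for general eigenvalue patterns of $A|_{\{\xi\}^\perp}$. If you want a proof that is strictly stronger than the paper's, this is indeed the point that needs a genuine new idea; if you are content to match the paper, you may simply invoke the displayed equivalence above and drop the foliation argument entirely.
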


Our next target is to study those real hypersurfaces in $\mathbb{CH}^2$ which arise from 1-parameter subgroups of $U(1,2)$. Let $g:\tilde{I}\subset \RR\to U(1,2)$ be a regular curve such that $\Omega=g^ {-1}g'$ is a CKO-form, namely 
\begin{equation}\label{1-parameter}
\Omega = g^{-1}g' = \begin{pmatrix}
i \alpha_0 & \frac{i}{2}(\alpha_0-\alpha_1) & \xx-i\, y_0 \\
\frac{i}{2}(\alpha_1-\alpha_0) & i\alpha_1 & -\xx+i\,y_1 \\
\xx+i y_0 & \xx+i y_1 & i w
\end{pmatrix},
\end{equation}
where $\alpha_0$, $\alpha_1$, $\xx$, $y_0$, $y_1$ and $w$ are real functions defined on the interval $\tilde{I}$. 
\begin{remark} \label{1-param}\normalfont The curve $g$ is a 1-parameter subgroup of $U(1,2)$ if, and only if, all functions $\alpha_0$, $\alpha_1$, $\xx$, $y_0$, $y_1$ and $w_2$ are constant, and not simultaneously zero (cf. \cite{H}). $\Box$
\end{remark}

As in previous pages, consider $\bar{M}=S^1\times \RR\times\RR^+\times\tilde{I}$ and the map
\begin{gather*}\tilde {\Psi}:\bar{M}\to H_1^5, \quad 
\tilde{\Psi}(\theta,x,h,\lambda)=
\tilde{\Psi}(\theta,s,r,t)=e^{i\theta}g(x)\begin{pmatrix} 1+\lambda^2/2 -i h \\ -\lambda^2/2+ i h \\ \lambda \end{pmatrix}.
\end{gather*}
For $M^3=\pi_{-}(\bar{M})$, assume that the associated map $\Psi:M^3\to \mathbb{CH}^2$ provides a real hypersurface. We will use the proof of Theorem \ref{Axi2xi}, we set a (horizontal lift of a) unit normal $N$ of $M^3$ in $\mathbb{CH}^2$,
\begin{equation} N'=e^{i\theta}g(x)\begin{pmatrix}
-\lambda^2/2+ih \\ \lambda^2/2 -1 -i h \\ -\lambda \end{pmatrix},
\end{equation}
and we also obtain
\[ A\hat{\xi}=2\hat{\xi}, \quad A\partial_{\lambda}=\partial_{\lambda}.\] 
We are going to compute the other principal curvature in a simplified, but rather general situation. To do so, we assume that $g$ is a 1-parameter subgroup of $U(1,2)$, as in Remark \ref{1-param}. Denote again $\langle,\rangle$ the induced metric on $\bar{M}$. Since $\{\partial_{\lambda},\hat{\xi}\}$ are orthonormal, we define a tangent vector $W$ by
\[d\tilde{\Psi}(W)=\mathcal{H}d\tilde{\Psi}(\partial_x)-\langle d\tilde{\Psi}(\partial_x),d\tilde{\Psi}(\partial_{\lambda})\rangle d\tilde{\Psi}(\partial_{\lambda}) - \langle d\tilde{\Psi}(\partial_x),d\tilde{\Psi}(\xi)\rangle d\tilde{\Psi}(\xi).
\]
Some long computations give
\[ d\tilde{\Psi}(W)=\frac12\left(\lambda(2w-\alpha_0-\alpha_1)+2y_0+3\lambda^2(y_0-y_1)\right)i\,d\tilde{\Psi}(\partial_{\lambda}).
\]
\begin{remark} \normalfont If $y_0=y_1=0$ and $\alpha_0+\alpha_1=2w\neq 0$, then the map $\psi:M^3\to\mathbb{CH}^2$ is not an immersion.$\Box$
\end{remark}

Now,
\begin{align*}  D_{d\tilde{\Psi}(W)}N'&
= D_{\mathcal{H}d\tilde{\Psi}(\partial_x)}N'
-\langle d\tilde{\Psi}(\partial_x),d\tilde{\Psi}(\partial_{\lambda})\rangle D_{d\tilde{\Psi}(\partial_{\lambda})}N'
-\langle d\tilde{\Psi}(\partial_x),d\tilde{\Psi}(\xi)\rangle D_{d\tilde{\Psi}(\xi)}N' \\
&= D_{d\tilde{\Psi}(\partial_x)}N'
-\langle d\tilde{\Psi}(\partial_x),d\tilde{\Psi}(\partial_{\lambda})\rangle D_{d\tilde{\Psi}(\partial_{\lambda})}N'
-\langle d\tilde{\Psi}(\partial_x),i\tilde{\Psi}\rangle 
D_{d\tilde{\Psi}(\xi)}N' \\ 
& = D_{d\tilde{\Psi}(\partial_x)}N'
+\langle d\tilde{\Psi}(\partial_x),d\tilde{\Psi}(\partial_{\lambda})\rangle d\tilde{\Psi}(\partial_{\lambda})
+2\langle d\tilde{\Psi}(\partial_x),i\tilde{\Psi}\rangle d\tilde{\Psi}(\xi).
\end{align*}
With this, 
\begin{align*}
& - d\tilde{\Psi}(AW) = \mathcal{H}D_{d\tilde{\Psi}(W)}N'\\
& = \mathcal{H} D_{\mathcal{H}d\tilde{\Psi}(\partial_x)}N'
+\langle d\tilde{\Psi}(\partial_x),d\tilde{\Psi}(\partial_{\lambda})\rangle d\tilde{\Psi}(\partial_{\lambda})
+2\langle d\tilde{\Psi}(\partial_x),i\tilde{\Psi}\rangle  d\tilde{\Psi}(\xi) \\
&=\frac{-1}{2}\left(\lambda(2w-\alpha_0-\alpha_1)+2y_1+3\lambda^2(y_0-y_1)\right)i\,d\tilde{\Psi}(\partial_{\lambda}).
\end{align*}
All together,
\begin{equation} AW=\frac{\lambda(2w-\alpha_0-\alpha_1)+2y_1+3\lambda^2(y_0-y_1)}{\lambda(2w-\alpha_0-\alpha_1)+2y_0+3\lambda^2(y_0-y_1)} W.
\end{equation}
This principal curvature can be written 
\begin{equation}\label{principalcurvature}\rho = \frac{a}{b},\quad  a=\lambda(2w-\alpha_0-\alpha_1)+2y_1+3\lambda^2(y_0-y_1), \ 
b=a+2(y_0-y_1).
\end{equation}
Clearly, $\rho$ depends only on $\lambda$. Assume that $\rho$ is a constant function. By taking derivation w.r.t. $\lambda$, then $0=\rho'=\frac{a'b-ab'}{b^2}$, so that $0=a'b-ab'=2a(y_0-y_1)$. If $y_0\neq y_1$, then $0=\lambda(2w-\alpha_0-\alpha_1)+2y_1+3\lambda^2(y_0-y_1)$ for infinitely many $\lambda\in\RR$, which is a contradiction. Then, $y_0=y_1$, and then $\rho=1$ everywhere. Finally, $M^3$ is an open subset of a horosphere, because this is the only real hypersurface in $\mathbb{CH}^2$ such that $A\xi=2\xi$ and $AX=X$ for any $X\perp \xi$. 

\begin{theorem} \label{horo} Let $g:\tilde{I}\subset\RR\rightarrow U(1,2)$ be a 1-parameter subgroup of $U(1,2)$ such that $\Omega=g^{-1}g'$ is a \emph{CKO}-form and its associated map $\Psi:M^3\to \mathbb{CH}^2$ is a Hopf real hypersurface. The following are equivalent:
\begin{enumerate}
\item $\psi:M^3\to \mathbb{CH}^2$ is an open subset of a horosphere.
\item The principal curvature $\rho$ in \eqref{principalcurvature} is a constant function.
\item $y_0=y_1$ in \eqref{1-parameter}.
\end{enumerate}
\end{theorem}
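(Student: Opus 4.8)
The plan is to prove the cycle of implications $(1)\Rightarrow(2)\Rightarrow(3)\Rightarrow(1)$, using the explicit shape-operator data already assembled above for the one-parameter subgroup case: on $M^3$ one has the orthogonal frame $\{\hat\xi,\partial_\lambda,W\}$ with $A\hat\xi=2\hat\xi$, $A\partial_\lambda=\partial_\lambda$, $AW=\rho\,W$, and $\rho=a/b$ with $a,b$ as in \eqref{principalcurvature}. For $(1)\Rightarrow(2)$ I would recall that a horosphere in $\mathbb{CH}^2$ has constant principal curvatures $2$ on $\RR\xi$ and $1$ on $\xi^\perp$ (see \cite{Mo}); since $W\perp\xi$, identifying $M^3$ with an open piece of a horosphere forces $\rho\equiv 1$, in particular $\rho$ is a constant function.

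For $(2)\Rightarrow(3)$, the key observation is that, by Remark \ref{1-param}, $g$ being a one-parameter subgroup makes $\alpha_0,\alpha_1,\xx,y_0,y_1,w$ constant, so $a$ and $b$ are polynomials in the single variable $\lambda$ with $b-a=2(y_0-y_1)$ constant. Writing $\rho\equiv c$ yields $(1-c)\,a=2c(y_0-y_1)$. If $y_0\neq y_1$: when $c\neq 1$ this forces $a$ to be constant in $\lambda$, which contradicts that $a=3(y_0-y_1)\lambda^2+(2w-\alpha_0-\alpha_1)\lambda+2y_1$ has non-zero leading coefficient $3(y_0-y_1)$; when $c=1$ it reads $0=2(y_0-y_1)$, again a contradiction. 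Hence $y_0=y_1$. (Equivalently, one may simply differentiate $\rho=a/b$ with respect to $\lambda$, as indicated in the discussion preceding the statement.)

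For $(3)\Rightarrow(1)$, if $y_0=y_1$ then $b=a$, hence $\rho\equiv 1$; here one must note that $a$ never vanishes, because when $y_0=y_1$ the formula for $d\tilde{\Psi}(W)$ obtained above reduces to $\tfrac{1}{2}\,a\,i\,d\tilde{\Psi}(\partial_\lambda)$, and $W\neq 0$ since $\Psi$ is an immersion. Consequently $A\xi=2\xi$ and $AX=X$ for every $X\perp\xi$, and since this second fundamental form characterizes the horosphere among Hopf real hypersurfaces of $\mathbb{CH}^2$ (cf. \cite{Mo}), $M^3$ is an open subset of a horosphere, closing the cycle.

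I do not expect a genuine obstacle, since the hard computations are already in place; the two points needing care are ruling out the degenerate possibility $a\equiv 0$ in $(3)\Rightarrow(1)$ — which is exactly where the immersion hypothesis on $\Psi$ is used — and the appeal, in $(1)\Leftrightarrow(3)$, to the classical fact that the only Hopf real hypersurface of $\mathbb{CH}^2$ with $A\xi=2\xi$ and $A|_{\xi^\perp}=\mathrm{Id}$ is (an open subset of) a horosphere. Everything else reduces to the explicit formula \eqref{principalcurvature} and elementary polynomial considerations.
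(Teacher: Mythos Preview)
Your proposal is correct and follows essentially the same route as the paper: the paper proves $(2)\Rightarrow(3)$ by differentiating $\rho=a/b$ in $\lambda$ (using $b'=a'$ and $b-a=2(y_0-y_1)$) to reach the same contradiction with the nonvanishing $3(y_0-y_1)\lambda^2$ term, and then obtains $(3)\Rightarrow(1)$ exactly as you do, via $\rho\equiv 1$ and the characterization of the horosphere in \cite{Mo}. Your explicit remark that the immersion hypothesis on $\Psi$ is what excludes $a=b=0$ when $y_0=y_1$ is a point the paper leaves implicit.
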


{\small  } 

\par 
\begin{trivlist} 

\item[]
Jong Taek Cho\\
Department of Mathematics\\ 
Chonnam National University\\ 
Gwangju 61186, Korea\\
E-mail:  jtcho@chonnam.ac.kr \vskip.2cm

\item[] Makoto Kimura\\ 
Department of Mathematics, \\
Faculty of Science, \\ 
Ibaraki University, \\ 
Mito, Ibaraki 310-8512, JAPAN\\ 
e-mail: makoto.kimura.geometry@vc.ibaraki.ac.jp

\item[] Miguel Ortega \\
Institute of Mathematics-UGR, IMAG \\
Departament of Geometry and Topology\\
Faculty of Sciences, University of Granada\\ 
18071 Granada (Spain)\\
e-mail: miortega@ugr.es

\end{trivlist} 
\end{document}